\titleformat{\subsection}[hang]
{\filcenter\bf}
{\thesubsection.}
{1pt}
{}
\declaretheoremstyle[bodyfont=\normalfont]{normalbody}
\declaretheorem[numberwithin=section,name=Theorem]{theorem}
\declaretheorem[sibling=theorem,style=normalbody,name=Definition]{definition}
\declaretheorem[sibling=theorem,name=Corollary]{corollary}
\declaretheorem[sibling=theorem,name=Lemma]{lemma}
\declaretheorem[sibling=theorem,name=Proposition]{proposition}
\declaretheorem[sibling=theorem,style=normalbody,name=Remark]{remark}
\newcommand{\Z}{\mathbb{Z}}
\newcommand{\N}{\mathbb{N}}
\newcommand{\Q}{\mathbb{Q}}
\newcommand{\F}{\mathbb{F}}
\newcommand{\p}{\mathfrak{p}}
\renewcommand{\P}{\mathfrak{P}}
\newcommand{\ent}{\mathcal{O}}
\newcommand{\cmod}[3]{#1\equiv#2\pmod{#3}}
\newcommand{\spot}[1]{\mathfrak{#1}}
\newcommand{\Hom}{\operatorname{Hom}}
\newcommand{\Aut}{\operatorname{Aut}}
\newcommand{\Gal}{\operatorname{Gal}}
\newcommand{\Br}{\operatorname{Br}}
\newcommand{\nr}{\operatorname{nr}}
\newcommand{\cyc}{\operatorname{cyc}}
\newcommand{\Res}{\operatorname{Res}}
\newcommand{\inv}{\operatorname{inv}}
\newcommand{\Tate}{\hat{H}}
\newcommand{\Bad}{\operatorname{Bad}}
\DeclareFontFamily{U}{wncy}{}
\DeclareFontShape{U}{wncy}{m}{n}{<->wncyr10}{}
\DeclareSymbolFont{mcy}{U}{wncy}{m}{n}
\DeclareMathSymbol{\Sh}{\mathord}{mcy}{"58}
\DeclareFontFamily{U}{wncy}{}
\DeclareFontShape{U}{wncy}{m}{n}{<->wncyr10}{}
\DeclareSymbolFont{mcy}{U}{wncy}{m}{n}
\DeclareMathSymbol{\Ch}{\mathord}{mcy}{"51}
\title{Bad places for the approximation property for finite groups}
\author{Felipe Rivera-Mesas}
\address{Departamento de Matemáticas, Facultad de Ciencias, Universidad de Chile}
\email{felipe.rivera.m@ug.uchile.cl}
\thanks{ {\sl Keywords:} Galois cohomology, weak approximation, Tame Approximation Problem \\
		\mbox{\hspace{1.3em}} {\sl MSC codes (2020):} 11S25, 11R34. \\
		\mbox{\hspace{1.3em}} This work was partially supported by the Beca de Magíster Nacional 2020 via Anid.}
\begin{document}

\maketitle

\begin{abstract}
	Given a number field $k$ and a finite $k$-group $G$, the Tame Approximation Problem for $G$ asks whether the restriction map $H^1(k,G)\to\prod_{v\in\Sigma}H^1(k_v,G)$ is surjective for every finite set of places $\Sigma\subseteq\Omega_k$ disjoint from $\Bad_G$, where $\Bad_G$ is the finite set of places that either divides the order of $G$ or ramifies in the minimal extension splitting $G$. In this paper we prove that the set $\Bad_G$ is ``sharp''. To achieve this we prove that there are finite abelian $k$-groups $A$ where the map $H^1(k,A)\to\prod_{v\in\Sigma_0}H^1(k_v,A)$ is not surjective in a set $\Sigma_0\subseteq\Bad_A$ with particular properties, namely $\Sigma_0$ is the set of places that do not divide the order of $A$ and ramify in the minimal extension splitting $A$.
\end{abstract}

%
%
%
%

\section{Introduction}

	Let $k$ be a number field and $G$ be a finite $k$-group i.e.\ a finite group with a continuous action of the profinite group $\Gal(k):=\Gal(\overline{k}/k)$ which is compatible with the group structure of $G$. A finite $k$-group $G$ has approximation in a finite set of places $\Sigma\subseteq\Omega_k$ if the restriction map
		\begin{equation} \label{flecha aprox}
			H^1(k,G) \to \prod_{v\in\Sigma} H^1(k_v,G),
		\end{equation}
	is surjective (cf.\ Definition \ref{def aprox debil}). Under this definition, we say that a finite $k$-group has \emph{the approximation property} if it has approximation in every finite set of places of $k$. This property is in general too strong and it is not satisfied, even for abelian groups. 
	
	\vspace{1em}
	
	To illustrate the fact that studying the approximation property for a finite $k$-group $G$ is a hard problem, consider what happens when $G$ is constant (i.e.\ $\Gal(k)$ acts on $G$ trivially). In this case the problem of determining whether $G$ has approximation in $\Sigma$ becomes \emph{the Grunwald Problem} in $\Sigma$, i.e.\ determining whether the restriction map
		\[ \Hom(\Gal(k),G) \to \prod_{v\in\Sigma} \Hom(\Gal(k_v),G)/\sim, \]
	is surjective, where $\sim$ is the equivalence relation defined by conjugation and $\Hom$ denotes the set of continuous homomorphism. Affirmative answers to the Grunwald Problem for every finite set of places $\Sigma$ are known in many cases. For example: abelian groups of odd order over every number field, by the Grunwald-Wang Theorem (cf.\ \cite{Grunwald1933} and \cite{Wang1950}); and solvable groups of order prime to the number of roots of unity in $k$, by Neukirch's Theorem (cf.\ \cite{Neukirch1979}). Note that an affirmative answer to the Grunwald problem for every $\Sigma$ implies an affirmative answer to the inverse Galois problem. In fact, an affirmative answer to the Grunwald Problem for infinitely many $\Sigma$'s already implies this.
	
	\vspace{1em}
	
	A weaker property than the previously defined is the following. A finite $k$-group has approximation \emph{away from a set of places} $T\subseteq\Omega_k$ if it has approximation in every finite set of places disjoint from $T$. A finite $k$-group that satisfies this for a certain finite $T$ is said to have \emph{weak weak approximation}. So, if a constant $k$-group $G$ has approximation away from a finite set of places of $k$, then it is a Galois group over $k$. The hypothesis of this implication can be relaxed to an even weaker notion (cf.\ \cite[$\S$4, Proposition 1]{Harari2007}).

	\vspace{1em}
	
	Studying these approximation properties for finite $k$-groups is equivalent to studying certain approximation properties for certain homogeneous spaces (cf.\ \cite[Proposition 2.4]{Grunwald}). Now we will briefly recall some implications of this equivalence.
	
	\vspace{1em}
	
	Let $X$ be a (smooth and geometrically integral) $k$-variety such that $X(k)\neq\varnothing$. We say that $X$ has \emph{weak approximation away from} $T\subseteq\Omega_k$ if the image of $X(k)$ by the diagonal embedding in $\prod_{v\in S}X(k_v)$ is dense for every finite set $S\subseteq\Omega_k$ disjoint from $T$.
	
	Now we briefly recall the Brauer-Manin obstruction to weak approximation for $k$-varieties (cf.\ \cite[Chapter 5]{torsors}). Let $\Br_{\nr}X$ be the unramified Brauer group of $X$ (cf. \cite[p. 97, \emph{Brauer groups}]{torsors}). For each place $v\in\Omega_k$ we denote by
		\[ \inv_v:\Br k_v\to\Q/\Z, \] 
	 the Hasse invariant (cf.\ \cite[Proposition 8.4]{HarariCG}). We denote the product $\prod_{v\in\Omega_k}X(k_v)$ by $X(k_\Omega)$. We define $X(k_\Omega)^{\Br_{\nr}}$ as the set of points $(P_v)_{v\in\Omega_k}\in X(k_\Omega)$ such that
	 	\[ \sum_{v\in\Omega_k}\inv_v(\alpha(P_v))=0,\text{ for every }\alpha\in\Br_{\nr}X, \]
	where $\alpha(P_v)\in\Br k_v$ denotes the evaluation of $\alpha$ at the point $P_v$ and the sum, which a priori is infinite, is in fact finite for elements $\alpha\in\Br_{\nr}X$ (cf.\ \cite[\S 5.2]{torsors}). Then we have the following inclusions:
		\[ \overline{X(k)}\subseteq X(k_\Omega)^{\Br_{\nr}}\subseteq X(k_\Omega), \]
	where $\overline{X(k)}$ is the topological closure of the image in $X(k_\Omega)$ of $X(k)$ via the diagonal embedding. If $X(k_\Omega)^{\Br_{\nr}}\neq X(k_\Omega)$ we say that there is a Brauer-Manin obstruction to weak approximation.

	\vspace{1em}

	A conjecture by Colliot-Thélène (cf.\ \cite[Introduction]{CT2003}) says that the Brauer-Manin obstruction to weak approximation should be the only obstruction for homogeneous spaces $X$ under a connected linear group, that is $\overline{X(k)}=X(k_\Omega)^{\text{Br}_\text{nr}}$. Now, given a finite $k$-group we can regard it as a finite algebraic $k$-group which can be embedded into $\mathrm{SL}_n$ for some $n\in\N$. Then, if we consider the quotient space $X:=\text{SL}_n/G$, we get a homogeneous space under the connected group $\text{SL}_n$. Furthermore, since $\Br_{\nr}X/\Br k$ is finite (cf. \cite[\S 13.1]{CTSBrauer} and \cite[Theorem 4.4.2]{CTSBrauer}), the equality $\overline{X(k)}=X(k_\Omega)^{\text{Br}_\text{nr}}$ for the $k$-variety $\mathrm{SL}_n/G$ implies that the finite $k$-group $G$ has weak weak approximation. In other words, Colliot-Thélène's conjecture would imply that, for every finite $k$-group $G$, there is a finite set of places $T(G,k)$ where $G$ has approximation away from it. This conjecture has been proved in many cases, e.g. for iterated semidirect products of finite abelian groups by Harari (cf.\ \cite[Théorème 1]{Harari2007}) and hypersolvable groups by Harpaz and Wittenberg (cf.\ \cite[Théorème 6.6]{HW2020}).
	
	Thus for a $k$-group $G$ as above we have that there is a finite set of places $T\subset\Omega_k$ such that $G$ has approximation away from $T$ (cf.\ Definition \ref{def aprox debil}). Unfortunately, this proof of weak weak approximation, which use the Brauer-Manin obstruction, does not allow us to obtain an explicit description of the set $T$.
	
	\vspace{1em}
	
	In \cite{Grunwald} Demarche, Lucchini Arteche and Neftin defined, for every finite $k$-group $G$, the finite set of places $\Bad_G\subseteq\Omega_k$ as the union of the set $\Bad_G^d$ of places of $k$ that divide the order of $G$ and the set $\Bad_G^r$ of places of $k$ that ramify in the minimal extension splitting $G$ (cf.\ \cite[Definition 2.1]{Grunwald}). Following the result by Harari, they proved that if $G$ is an iterated semidirect product of finite abelian $k$-groups, then $G$ has approximation away from $\Bad_G$ (cf.\ \cite[Theorem 1.1]{Grunwald}). Moreover, they asked whether every finite $k$-group has approximation away from $\Bad_G$. They called this question the \emph{Tame Approximation Problem} (cf.\ \cite[\S 1.2]{Grunwald}). Note that this result cannot be considered as an improvement of the result given by Harari, but rather as a complement of it. Indeed, despite the fact that \cite[Theorem 1.1]{Grunwald} describes explicitly a finite set of places where $G$ has approximation away from it, it does not explain the behavior of the map \eqref{flecha aprox} in subsets of $\Bad_G$. 
	
	Recently, in \cite{Lucchini2019} Lucchini Arteche gave a link between the results above. Given a finite $k$-group $G$, and under the assumption of Colliot-Thélène's conjecture on the Brauer-Manin obstruction, we have an affirmative answer to the Tame Approximation Problem for $G$ (cf.\ \cite[Corollary 6.3]{Lucchini2019}). This result is obtained by describing explicitly the group $\Br_{\nr}X$ with $X=\mathrm{SL}_n/G$. In this way, we get, for every finite $k$-group, an explicit and general description of the set of places of $k$ where the weak weak approximation property is satisfied.
	
	\vspace{1em}
	

	Therefore, studying the set $\Bad_G$ is interesting itself. In particular, it is interesting to know how much the subsets $\Bad_G^r$ and $\Bad_G^d$ affect approximation. By \cite[Theorem 5.1]{Grunwald}, there are examples of finite $k$-groups which do not have approximation in $\Sigma\subseteq\Bad_G^d$ (cf.\ Definition \ref{def aprox debil}). These examples satisfy $\Bad_G^r=\varnothing$ and are necessarily non abelian. Even more, here below, we will note that such examples cannot be obtained among abelian finite $k$-groups. Now what happens with $\Bad_G^r$? We will prove that there are finite $k$-groups that do not have approximation in $\Sigma\subseteq\Bad_G^r\smallsetminus\Bad_G^d$. These examples will be abelian so, by standard arguments in group cohomology, we will concentrate on $\ell$-primary torsion abelian $k$-groups, with $\ell$ a prime.
	
	More precisely, we will exhibit explicit examples of finite abelian $k$-groups $A$ and sets $\Sigma\subseteq\Bad_A^r\smallsetminus\Bad_A^d$ such that $A$ does not have approximation in $\Sigma$ but it does in $\Sigma\smallsetminus\{v\}$ for every $v\in\Sigma$. These examples are quite general. Indeed, firstly, for every $\ell$ there is such an example where $A$ is an $\ell$-group (cf.\ Theorem \ref{no aproximacion en l-grupos}); and secondly, for every number field $k$ and every place $\p\in\Omega_k$ not dividing 2, there is such an example where $\p\in\Sigma$ and $A$ has approximation in $\Sigma\smallsetminus\{\p\}$ (cf.\ Theorem \ref{no aproximacion en cuerpos}). Thus, these examples prove that the set $\Bad_G$ is ``sharp'' in the following sense: if $f$ assigns to each finite $k$-group $G$ a finite set of places $f(G)\subseteq\Omega_k$ such that $G$ has approximation away from $f(G)$, then $f(G)$ cannot have less elements than $\Bad_G$ for every finite $k$-group $G$. That is, the subsets $\Bad_G^r$ and $\Bad_G^d$ cannot be ignored in general.
	
%
%
	
\section*{Acknowledgement}

	I would like to thank my supervisor Giancarlo Lucchini Arteche for suggesting this problem and for his helpful comments about the content and redaction of this article. Besides, I would like to thank professor Cyril Demarche for reviewing and for his useful comments about this work.

%
%

\section{Conventions and notations}

	All throughout this text $k$ denotes a number field and $\Omega_k$ the set of places of $k$. For each $v\in \Omega_k$ we denote by $k_v$ the completion of $k$ at $v$ and, if $v$ is a non archimedean place, $\kappa(v)$ denotes the residue field at $v$. Besides, for every field $K$, we denote by $\Gal(K)$ its absolute Galois group.
	If $L/k$ is a Galois extension with group $G$, for each $v\in\Omega_k$ we define its decomposition subgroup $G_v$ in $L/k$ as the decomposition subgroup $G(w/v)$ in $L/k$ of some place $w\in\Omega_L$ above $v$. In this case $\Gal(L_w/k_v) = G_v$. Note that this notion depends on $w$ but this is not a issue in the first cohomology since if $w$ and $w'$ are places over $v$ then $G(w/v)=g^{-1}G(w'/v)g$ for some $g\in G$ and therefore the cohomology sets $H^1(G(w/v),A)$ and $H^1(G(w'/v),A)$ are canonically isomorphic for every $G$-group $A$.

	We say that $A$ is a finite $k$-group if $A$ is a group with a continuous action of $\Gal(k)$ compatible with its group structure. When $A$ is abelian we say $A$ is an abelian $k$-group. Moreover, for each finite abelian $k$-group $A$ we denote by $\hat{A}$ its Cartier dual, i.e.\ the $k$-group $\Hom(A,\overline{k}^*)$ with the continuous action $\sigma\cdot f(x)=\sigma\cdot f(\sigma^{-1}\cdot x)$. An algebraic extension $L/k$ is said to split $A$ if $\Gal(L)$ acts trivially on $A$. Likewise $A$ is said to be constant if $\Gal(k)$ acts on $A$ trivially. We note that for every finite $k$-group $A$ there exists a unique minimal finite Galois extension $L/k$ splitting $A$. Indeed, this extension corresponds to the kernel of the morphism $\Gal(k)\to\Aut(A)$ induced by the action of $\Gal(k)$ on $A$.	

\section{Preliminaries}

	In this section, we recall some definitions related to the approximation property for $k$-groups and some known results in this topic. \\
	
	We recall the notion of approximation in $k$-groups that was mentioned in the introduction.

	\begin{definition} \label{def aprox debil}
		Let $G$ be a $k$-group. We say \emph{$G$ has approximation in a set $\Sigma\subseteq\Omega_k$} if the restriction map
			\[ H^1(k,G) \to \prod_{v\in\Sigma} H^1(k_v,G), \]
		is surjective. Likewise, we say \emph{$G$ has approximation away from $\Sigma$}, if $G$ has approximation for every finite set of places of $k$ that is disjoint from $\Sigma$. For convenience, we assume that $G$ has approximation in $\Sigma=\varnothing$.
	\end{definition}
	
	An important object to study the approximation property for $k$-groups is the following.

	\begin{definition}
		Let $A$ be a $k$-group and $\Sigma$ be a set of places of $k$. We denote by $\Sh^1_\Sigma(k,A)$ the kernel of the restriction map
			\[ H^1(k,A) \to \prod_{v\notin\Sigma} H^1(k_v,A). \]
		We write $\Sh^1(k,A):=\Sh^1_{\varnothing}(k,A)$ and we define $\Sh^1_\omega(k,A)$ as the union of the groups $\Sh^1_\Sigma(k,A)$ for every finite $\Sigma$. For a Galois extension $L/k$ splitting $A$ with Galois group $G$, we denote by $\Sh^1_\Sigma(L/k,A)$ the kernel of the restriction map
			\[ H^1(G,A) \to \prod_{v\notin\Sigma} H^1(G_v,A), \]
		and $\Sh^1(L/k,A):=\Sh^1_{\varnothing}(L/k,A)$.
	\end{definition}
	
	The following proposition, which corresponds to \cite[Proposition 2.5]{Grunwald}, gives us a link between the previously defined concepts.

	\begin{proposition} \label{aproximacion y sha}
		Let $A$ be a abelian $k$-group and $\Sigma\subseteq\Omega_k$ be a finite set of places. Then there is a pairing
			\[ \prod_{v\in\Sigma}H^1(k_v,A)\times\Sh_\Sigma^1(k,\hat{A})\to\Q/\Z, \]
		such that its right kernel is $\Sh^1(k,A)$ and its left kernel is the image of the restriction map
			\[ H^1(k,A) \to \prod_{v\in\Sigma} H^1(k_v,A). \]
		In particular, $A$ has approximation in $\Sigma$ if and only if $\Sh_\Sigma^1(k,\hat{A})=\Sh^1(k,\hat{A})$.
	\end{proposition}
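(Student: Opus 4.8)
The plan is to construct the pairing out of local Tate duality and then read off its two kernels from the Poitou--Tate exact sequence for the finite Galois module $A$. Recall that for each place $v$ the cup product followed by the Hasse invariant $\inv_v$ gives a perfect pairing $\langle\,,\,\rangle_v\colon H^1(k_v,A)\times H^1(k_v,\hat A)\to\Q/\Z$, and, writing $P^i(M):=\prod'_{v}H^i(k_v,M)$ for the restricted product over all places, these assemble into a perfect pairing $P^1(A)\times P^1(\hat A)\to\Q/\Z$; also $H^1(k,\hat A)$ is finite. Given $(P_v)_{v\in\Sigma}$ and $\beta\in\Sh^1_\Sigma(k,\hat A)$ I would set
\[ \langle(P_v)_v,\beta\rangle:=\sum_{v\in\Sigma}\langle P_v,\Res_v\beta\rangle_v, \]
a finite sum, bilinear in both variables. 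Since $\Res_v\beta=0$ for $v\notin\Sigma$, after extending $(P_v)_v$ by $0$ outside $\Sigma$ this equals $\theta((P_v)_v)(\beta)$, where $\theta\colon P^1(A)\to H^1(k,\hat A)^{*}$, $\theta((Q_v)_v)(\beta)=\sum_v\langle Q_v,\Res_v\beta\rangle_v$, is the map appearing in the Poitou--Tate sequence $H^1(k,A)\xrightarrow{\lambda}P^1(A)\xrightarrow{\theta}H^1(k,\hat A)^{*}$. This reinterpretation is the bridge to the global theory.

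The right kernel is immediate: if $\beta\in\Sh^1(k,\hat A)$ then $\Res_v\beta=0$ for all $v$ and $\beta$ pairs trivially with everything; conversely, if $\beta\in\Sh^1_\Sigma(k,\hat A)$ pairs trivially with all families, then testing against families supported at a single $w\in\Sigma$ gives $\langle P_w,\Res_w\beta\rangle_w=0$ for all $P_w$, so $\Res_w\beta=0$ by perfectness of local duality, and, combined with $\Res_v\beta=0$ for $v\notin\Sigma$, this gives $\beta\in\Sh^1(k,\hat A)$. For the left kernel, the inclusion $\operatorname{im}(\mathrm{res})\subseteq(\text{left kernel})$ is exactly the vanishing of $\theta\circ\lambda$ in the Poitou--Tate sequence: if $(P_v)_v=(\Res_v\xi)_v$ then pairing with any $\beta\in\Sh^1_\Sigma(k,\hat A)$ yields $\theta(\lambda\xi)(\beta)=0$.

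The crux, and the step I expect to be the main obstacle, is the reverse inclusion: a family $(P_v)_{v\in\Sigma}$ killing all of $\Sh^1_\Sigma(k,\hat A)$ must be a restriction of a global class. The idea is to extend $(P_v)_v$ by $0$ to $\tilde P\in P^1(A)$ and then correct it outside $\Sigma$ to make it globally split. The hypothesis says $\theta(\tilde P)\in H^1(k,\hat A)^{*}$ kills $\Sh^1_\Sigma(k,\hat A)=\ker\!\big(H^1(k,\hat A)\to\prod'_{v\notin\Sigma}H^1(k_v,\hat A)\big)$, hence factors through the image of that localization map, which is a finite, hence closed, subgroup of the locally compact group $\prod'_{v\notin\Sigma}H^1(k_v,\hat A)$. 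Its Pontryagin dual being $\prod'_{v\notin\Sigma}H^1(k_v,A)$ by local duality, the opposite character extends to a continuous character of the whole restricted product, i.e.\ is $\gamma\mapsto\sum_{v\notin\Sigma}\langle c_v,\gamma_v\rangle_v$ for some $c=(c_v)_{v\notin\Sigma}\in\prod'_{v\notin\Sigma}H^1(k_v,A)$. Then the family equal to $P_v$ on $\Sigma$ and to $c_v$ off $\Sigma$ lies in $\ker\theta=\operatorname{im}\lambda$ by exactness of Poitou--Tate at $P^1(A)$, so it is $\lambda\xi$ for some $\xi\in H^1(k,A)$, and in particular $\Res_v\xi=P_v$ for every $v\in\Sigma$. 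The delicate bookkeeping here --- closedness of the image, the self-duality of the restricted product, and the fact that the extended character is still $\Q/\Z$-valued --- is standard Poitou--Tate material once one invokes the finiteness of $H^1(k,\hat A)$.

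Finally, the displayed equivalence is formal: $A$ has approximation in $\Sigma$ iff the restriction map is onto iff its image, which we have identified with the left kernel of the pairing, is all of $\prod_{v\in\Sigma}H^1(k_v,A)$ iff the pairing vanishes identically iff every $\beta\in\Sh^1_\Sigma(k,\hat A)$ lies in the right kernel $\Sh^1(k,\hat A)$, that is, $\Sh^1_\Sigma(k,\hat A)=\Sh^1(k,\hat A)$.
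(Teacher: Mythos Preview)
The paper does not actually give a proof; it simply cites \cite[Proposition~2.5]{Grunwald}. Your argument via local Tate duality and the Poitou--Tate exact sequence is the standard route and is sound in outline, and you correctly read the right kernel as $\Sh^1(k,\hat A)$ (the printed ``$\Sh^1(k,A)$'' in the statement is a typo, as the ``in particular'' clause confirms).

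There is, however, one genuine error. You assert that $H^1(k,\hat A)$ is finite and use this to deduce that the image of $H^1(k,\hat A)$ in $\prod'_{v\notin\Sigma}H^1(k_v,\hat A)$ is finite, hence closed, so that Pontryagin duality lets you extend the character. But $H^1(k,\hat A)$ is typically infinite over a number field: already for $A=\Z/n\Z$ one has $\hat A=\mu_n$ and $H^1(k,\mu_n)\cong k^*/(k^*)^n$. Without finiteness, neither the closedness of the image nor the continuity of the induced character on it is automatic, and your extension step does not go through as written. The cleanest repair bypasses the extension argument entirely. You want to show $\tilde P\in\operatorname{im}\lambda_A+P^1_{\Sigma^c}(A)$, where $P^1_{\Sigma^c}(A)\subseteq P^1(A)$ is the subgroup of classes supported outside $\Sigma$. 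Since $P^1_{\Sigma^c}(A)$ is open (its quotient $\prod_{v\in\Sigma}H^1(k_v,A)$ is finite discrete) and $\operatorname{im}\lambda_A=\ker\theta_A$ is closed, their sum is an open, hence closed, subgroup, and therefore equals its double annihilator under the perfect pairing $P^1(A)\times P^1(\hat A)\to\Q/\Z$. Using that $\operatorname{im}\lambda_A$ and $\operatorname{im}\lambda_{\hat A}$ are exact mutual annihilators (this is Poitou--Tate), one finds
\[
\bigl(\operatorname{im}\lambda_A+P^1_{\Sigma^c}(A)\bigr)^{\perp}
=\operatorname{im}\lambda_{\hat A}\cap P^1_{\Sigma}(\hat A)
=\lambda_{\hat A}\bigl(\Sh^1_\Sigma(k,\hat A)\bigr).
\]
The hypothesis that $(P_v)_{v\in\Sigma}$ lies in the left kernel says precisely that $\tilde P$ annihilates $\lambda_{\hat A}(\Sh^1_\Sigma(k,\hat A))$, whence $\tilde P\in\operatorname{im}\lambda_A+P^1_{\Sigma^c}(A)$ and the desired global class exists.
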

	
	Let $A$ be a finite abelian $k$-group and $L/k$ be an extension splitting $A$. By \cite[Lemme 1.1 (iii)]{Sansuc1981} and Proposition \ref{aproximacion y sha}, we note that if $\Sigma$ is a finite set of places of $k$ where $A$ does not have approximation, then that set must contain at least one place whose decomposition group in $L/k$ is not cyclic. Even more, one can assume that $\Sigma$ only contains places whose decomposition group in $L/k$ is not cyclic. In particular, these places must ramify in $L/k$. Therefore, by Proposition \ref{aproximacion y sha}, we get that $A$ has approximation in every set $\Sigma\subseteq\Omega_k\smallsetminus\Bad_A^r$, in particular for $\Sigma\subseteq\Bad_A^d\smallsetminus\Bad_A^r$. Therefore, unlike \cite[Theorem 5.1]{Grunwald}, we cannot find a finite $k$-group $A$ and a finite set of places $\Sigma\subseteq\Bad_A^d\smallsetminus\Bad_A^r$ where $A$ does not have approximation and $A$ is abelian. 
	
	In addition, by Proposition \ref{aproximacion y sha}, we note that archimedean places play no role in the lack of surjectivity of \eqref{flecha aprox} for finite abelian $k$-groups. Indeed, these places have cyclic decomposition groups, hence $\Sh^1_{\Sigma\smallsetminus\Omega_\infty}(k,A)=\Sh^1_{\Sigma}(k,A)$ for every finite set of places $\Sigma$, where $\Omega_\infty$ is the set of archimedean places of $k$. Therefore, throughout this text we will assume that every finite set of places $\Sigma\subseteq\Omega_k$ only has non archimedean places.

	Now, we give a definition that will allow us to study the approximation property of an abelian $k$-group from a purely algebraic point of view.

	\begin{definition} 
		Let $G$ be a finite group. For a $G$-module $A$ we define $\Sh_{\cyc}^1(G,A)$ as the kernel of the restriction map
			 \[ H^1(G,A) \to \prod_{g\in G} H^1(\langle g\rangle,A). \]
	\end{definition}
	
	By \cite[Lemme 1.2]{Sansuc1981} we get the equalities 
		\begin{equation} \label{sha cyc omega}
			\Sh^1_{\cyc}(L/k,A)=\Sh_\omega^1(k,A)=\Sh_{\Sigma_0(k,A)}^1(k,A),
		\end{equation}
	for every finite abelian $k$-group $A$, where $L/k$ is the minimal extension splitting $A$ and the set $\Sigma_0(k,A)$ is defined as follows.
	
	\begin{definition}
		Let $A$ be a finite abelian $k$-group. We define $\Sigma_0(k,A)$ as the set of places of $k$ whose decomposition group in the minimal extension splitting $A$ is not cyclic. 
	\end{definition}
	
	Proposition \ref{aproximacion y sha} tells us then the following: if the finite abelian $k$-group $A$ does not have approximation in the finite set $\Sigma\subseteq\Omega_k$ then $\Sigma\cap\Sigma_0(k,\hat{A})\neq\varnothing$. In other words, when $A$ is a finite abelian $k$-group the set of ``bad places'' of $A$ is $\Sigma_0(k,\hat{A})$. This is consistent with the following result.
	
	\begin{proposition} \label{sigma0}
		The set $\Sigma_0(k,\hat{A})$ is contained in $\Bad_A$
	\end{proposition}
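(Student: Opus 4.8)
The plan is to prove the contrapositive: if $v\in\Omega_k$ does not belong to $\Bad_A$, then the decomposition group of $v$ in the minimal extension $M/k$ splitting $\hat A$ is cyclic, so that $v\notin\Sigma_0(k,\hat A)$. The main input is the classical fact that a non-archimedean place which is unramified in a finite Galois extension $N/k$ has cyclic decomposition group in $N/k$ (generated by Frobenius), together with the elementary remark that, for Galois extensions $M\subseteq N$ of $k$, the image of a decomposition group under the surjection $\Gal(N/k)\to\Gal(M/k)$ is again a decomposition group; hence it suffices to exhibit a finite Galois extension $N/k$ containing $M$ in which every $v\notin\Bad_A$ is unramified. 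Archimedean places need no attention: their decomposition groups have order at most $2$, hence are cyclic, so they never lie in $\Sigma_0(k,\hat A)$.

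Let $L/k$ be the minimal extension splitting $A$ and set $m=|A|$. Since every homomorphism $A\to\overline k^{\,*}$ takes values in $\mu_m$, we may identify $\hat A=\Hom(A,\mu_m)$ with the action inherited from the paper's formula. Then $\Gal(L\cdot k(\mu_m))$ acts trivially on $\hat A$: an element fixing $L$ acts trivially on $A$, and an element fixing $k(\mu_m)$ acts trivially on $\mu_m$, so it fixes every $f\in\Hom(A,\mu_m)$. Thus $N:=L\cdot k(\mu_m)$ splits $\hat A$, and by minimality of $M$ we obtain $M\subseteq N$.

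It remains to check that $v$ is unramified in $N/k$ whenever $v\notin\Bad_A$. If $v\notin\Bad_A^{\,d}$ then $v\nmid m$, so $v$ is unramified in $k(\mu_m)/k$; and if $v\notin\Bad_A^{\,r}$ then $v$ is unramified in $L/k$ by definition of $\Bad_A^{\,r}$. Since a place unramified in two finite subextensions of $\overline k/k$ is unramified in their compositum (locally at a place above $v$ the compositum is a compositum of unramified local extensions, hence contained in $k_v^{\mathrm{ur}}$), $v$ is unramified in $N=L\cdot k(\mu_m)$. Therefore the decomposition group of $v$ in $N/k$ is cyclic, hence so is its image, the decomposition group of $v$ in $M/k$, and $v\notin\Sigma_0(k,\hat A)$. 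This proves $\Sigma_0(k,\hat A)\subseteq\Bad_A$. The only points worth spelling out carefully in the full proof are the identification of a splitting field of $\hat A$ inside $L\cdot k(\mu_m)$ and the behaviour of ramification under taking composita; everything else is formal.
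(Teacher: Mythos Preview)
Your proof is correct and follows essentially the same route as the paper: both show that the minimal extension splitting $\hat A$ is contained in $L(\zeta_m)$ (the paper uses the exponent $e$ of $A$ rather than $m=|A|$, which is a harmless refinement), and then argue that any $v\notin\Bad_A$ is unramified in this compositum and therefore has cyclic decomposition group. You spell out a few steps the paper leaves implicit (archimedean places, the passage from cyclic decomposition group in $N$ to cyclic decomposition group in $M$), but the argument is the same.
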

	\begin{proof}
		If $A$ is a finite abelian $k$-group and $L/k$ is the minimal extension splitting $A$, then the minimal extension splitting $\hat{A}$ is contained in $L(\zeta_e)/k$, where $e$ is the exponent of $A$. Therefore, if a place $v\in\Omega_k$ ramifies in $L(\zeta_e)/k$, then it ramifies either in $L/k$ or in $L(\zeta_e)/L$, which necessarily implies that either $v$ ramifies in $L/k$ or divides $e$ (hence divides the order of $A$).
	\end{proof}
		
%
%
	
\section{Construction of non trivial quotients $\Sh_\Sigma^1(k,A)/\Sh^1(k,A)$} 

	By Proposition \ref{aproximacion y sha} and the equality \eqref{sha cyc omega}, if a finite abelian $k$-group $A$ does not have approximation in $\Sigma\subseteq\Omega_k$, then $\Sh^1_\Sigma(k,\hat{A})\neq 0$, so that $\Sh^1_{\cyc}(k,\hat{A})\neq 0$. Therefore, we will be interested in constructing finite abelian $k$-groups $A$ for which $\Sh^1_{\cyc}(G,A)$ is non trivial, where $G$ is the Galois group of the minimal extension splitting $A$.
	
	In order to construct these $k$-groups, we use the following lemma.
	

	\begin{lemma} \label{sha omega no trivial}
		Let $G$ be a finite group of order $n$ and exponent $e$. Let $I$ be the augmentation ideal of $\Z/n\Z[G]$. Then $\Sh_{\cyc}^1(G,I)\cong\Z/f\Z$ where $f=n/e$.
	\end{lemma}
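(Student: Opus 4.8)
The plan is to compute $\Sh^1_{\cyc}(G,I)$ by embedding it into the more tractable group $H^1(G,I)$ via the standard exact sequence defining the augmentation ideal. Set $R=\Z/n\Z[G]$ and consider $0\to I\to R\to\Z/n\Z\to 0$, where $\Z/n\Z$ carries the trivial $G$-action. Since $R$ is a free $\Z/n\Z$-module with basis $G$ permuted by $G$, it is an induced (co-induced) module, so $H^i(G,R)=0$ for $i\ge 1$ by Shapiro's lemma. The long exact sequence in cohomology then gives $H^1(G,I)\cong H^0(G,\Z/n\Z)/\operatorname{im}(H^0(G,R))=(\Z/n\Z)/(N\cdot\Z/n\Z)$, where $N=\sum_{g\in G}g$ acts on $R^G$; tracing the augmentation, the image of $R^G=(\Z/n\Z)\cdot N$ in $\Z/n\Z$ is multiplication by $|G|=n$, hence zero, so in fact $H^1(G,I)\cong\Z/n\Z$ canonically. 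More precisely, a cocycle is built from the connecting map: for $a\in\Z/n\Z$ lift to $a\in R$ and set $c_g=ga-a\in I$; this class generates $H^1(G,I)\cong\Z/n\Z$ with $a=1$ giving a generator.

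Next I would intersect this with the cyclic conditions. For each $g\in G$, the restriction of the class of $c$ (attached to $a\in\Z/n\Z$) to $\langle g\rangle$ is, by the same connecting-map description applied to the subgroup, the class in $H^1(\langle g\rangle,I)$ of the cocycle $h\mapsto ha-a$. Now $I$ is not induced as a $\langle g\rangle$-module, but one can still compute: restricting the sequence $0\to I\to R\to\Z/n\Z\to 0$ to $\langle g\rangle$, the module $R$ is $\langle g\rangle$-induced from $\Z/n\Z[\langle g\rangle]$ raised to $[G:\langle g\rangle]$ copies, so again $H^1(\langle g\rangle,R)=0$ and $H^1(\langle g\rangle,I)\cong \Z/n\Z\big/\big(\text{(sum over }\langle g\rangle)\cdot\Z/n\Z\big)=\Z/n\Z\big/(\operatorname{ord}(g)\Z/n\Z)=\Z/\operatorname{ord}(g)\Z$, and under this identification the restriction map $H^1(G,I)\to H^1(\langle g\rangle,I)$ is exactly the reduction $\Z/n\Z\to\Z/\operatorname{ord}(g)\Z$. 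Therefore the class attached to $a$ lies in $\Sh^1_{\cyc}(G,I)$ iff $\operatorname{ord}(g)\mid a$ for every $g\in G$, i.e.\ iff $e\mid a$, where $e=\operatorname{lcm}_{g}\operatorname{ord}(g)$ is the exponent. The subgroup of such $a$ in $\Z/n\Z$ is $e\Z/n\Z\cong\Z/(n/e)\Z=\Z/f\Z$, which is the claim.

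The one point requiring care — and the likely main obstacle — is verifying that the restriction map $H^1(G,I)\to H^1(\langle g\rangle,I)$ really is the reduction $\Z/n\Z\twoheadrightarrow\Z/\operatorname{ord}(g)\Z$ under the two connecting-map identifications, rather than reduction composed with some unit or twist. This is a compatibility of connecting homomorphisms for $G$ and its subgroup with the restriction maps on the three terms of $0\to I\to R\to\Z/n\Z\to 0$; it follows from functoriality of the long exact sequence, but one must check that the identification $H^0(\langle g\rangle,\Z/n\Z)=\Z/n\Z$ is compatible with $H^0(G,\Z/n\Z)=\Z/n\Z$ (it is, both being the trivial module) and that the image of $H^0(\langle g\rangle,R)$ in $\Z/n\Z$ is exactly $\operatorname{ord}(g)\Z/n\Z$ (the norm element of $\langle g\rangle$ augments to $\operatorname{ord}(g)$). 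Once this bookkeeping is done, the intersection $\bigcap_{g\in G}\ker(\Z/n\Z\to\Z/\operatorname{ord}(g)\Z)=e\Z/n\Z$ is immediate and gives $\Sh^1_{\cyc}(G,I)\cong\Z/f\Z$.
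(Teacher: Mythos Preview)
Your proof is correct and follows essentially the same route as the paper's. The only cosmetic difference is packaging: the paper invokes that $\Z/n\Z[G]$ is cohomologically trivial and uses Tate cohomology to write $H^1(H,I)\cong\hat H^0(H,\Z/n\Z)\cong\Z/|H|\Z$ for every $H\le G$ in one line, whereas you unwind this via the ordinary long exact sequence and compute the image of $R^H$ under augmentation as $|H|\cdot\Z/n\Z$; these are the same computation, and your extra care about compatibility of the connecting maps with restriction is exactly what the paper leaves implicit.
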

	\begin{proof}
		We have the exact sequence $0 \to I \to \Z/n\Z[G] \to \Z/n\Z \to 0$, where $\Z/n\Z[G]$ is cohomologically trivial (cf. \cite[Proposition 1.23]{HarariCG}). Then for all $H\leq G$ we get isomorphisms $H^1(H,I)\cong\Tate^0(H,\Z/n\Z)$, where $\Tate^0$ denotes the Tate cohomology group. We have $\Tate^0(H,\Z/n\Z)\cong\Z/|H|\Z$ for all $H\leq G$. Hence
			\[ \Sh_{\cyc}^1(G,I) \cong \ker \left[ \Z/n\Z \to \prod_{g\in G} \Z/|g|\Z \right] \cong \Z/f\Z, \]
		where $f=n/e$.
	\end{proof}

	To complement the previous result we note that, in order to ensure that an abelian $k$-group $A$ does not have approximation in $\Sigma$, it is sufficient to satisfy the following conditions
		\begin{enumerate}[(i)]
			\item $\Sh^1_\Sigma(k,\hat{A})\neq 0$; \label{i}
			\item $\Sh^1(k,\hat{A})=0$. \label{ii}
		\end{enumerate}
	So, we will be interested in knowing when the second condition is satisfied since we already know how to construct examples where the first condition is satisfied. The following lemma will help us in this direction.

	\begin{lemma} \label{sha omega trivial}
		Let $L/k$ be the minimal extension splitting the finite abelian $k$-group $A$. Suppose that there is a place $v\in\Omega_k$ such that $G_v=\Gal(L/k)$. Then $\Sh^1_{\Sigma}(k,A)=0$ for every finite set of places $\Sigma\subset\Omega_k$ which does not contain $v$.
	\end{lemma}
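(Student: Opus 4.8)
The plan is to show that the hypothesis $G_v = \Gal(L/k)$ forces the restriction map at $v$ to be injective on the relevant $\Sha$-group, so that the kernel over all places outside $\Sigma$ — and hence outside $\{v\}$ — is already trivial. First I would reduce to a purely group-cohomological statement: by inflation-restriction (or rather by \cite[Lemme 1.2]{Sansuc1981}, already invoked in \eqref{sha cyc omega}), the group $\Sh^1_\Sigma(k,A)$ for $\Sigma$ finite is computed inside $H^1(G,A)$ with $G = \Gal(L/k)$, using the decomposition subgroups $G_w$ for places $w$ of $k$; explicitly one has $\Sh^1_\Sigma(k,A) \subseteq \Sh^1_\omega(k,A) = \Sh^1_{\cyc}(L/k,A)$, so it suffices to work with the map $H^1(G,A) \to \prod_{w \notin \Sigma} H^1(G_w, A)$ and to exhibit one place $w \notin \Sigma$ whose restriction map $H^1(G,A) \to H^1(G_w,A)$ is injective. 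The place $v$ of the hypothesis is exactly such a place, provided $v \notin \Sigma$ (which holds because $\Sigma$ does not contain $v$): since $G_v = G$, the restriction map $H^1(G,A) \to H^1(G_v,A)$ is the identity, in particular injective. Therefore any class in $\Sh^1_\Sigma(k,A)$ is killed by restriction to $G_v$ and hence is zero.

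The one subtlety to handle carefully is the passage between the arithmetic $\Sha$ (defined via $H^1(k_w,A)$) and the Galois-theoretic $\Sha$ (defined via $H^1(G_w,A)$). Here I would use that $L/k$ splits $A$, so that by inflation $H^1(G_w,A) \hookrightarrow H^1(k_w,A)$ compatibly with the global inflation $H^1(G,A) \hookrightarrow H^1(k,A)$; thus a class in $\Sh^1_\Sigma(k,A)$, being locally trivial at every $w \notin \Sigma$, comes from a class in $H^1(G,A)$ that restricts to zero in every $H^1(G_w,A)$ for $w \notin \Sigma$, and in particular for $w = v$. Combined with $G_v = G$ this gives triviality. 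One should also note $v$ is necessarily non-archimedean and ramified of full local degree, but this plays no role beyond guaranteeing such a $v$ can exist.

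The main obstacle — really the only place where something must be checked rather than quoted — is making the identification $\Sh^1_\Sigma(k,A) \hookrightarrow \Sh^1_{\cyc}(L/k,A) \subseteq H^1(G,A)$ precise enough that the word ``restriction to $G_v$'' on the arithmetic side literally becomes restriction along $G_v \le G$ on the group side. Once \eqref{sha cyc omega} and the standard inflation-restriction compatibility are in hand, this is immediate, and the rest of the argument is the one-line observation that restricting along the identity subgroup inclusion $G = G_v \hookrightarrow G$ is the identity map. I would present it in that order: (1) recall $\Sh^1_\Sigma(k,A) \subseteq \Sh^1_\omega(k,A)$ and its description inside $H^1(G,A)$; (2) observe $v \notin \Sigma$ and $G_v = G$; (3) conclude the restriction $H^1(G,A) \to H^1(G_v,A)$ is injective, so $\Sh^1_\Sigma(k,A) = 0$.
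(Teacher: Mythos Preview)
Your proposal is correct and takes essentially the same approach as the paper: reduce the arithmetic $\Sh^1_\Sigma(k,A)$ to a statement inside $H^1(G,A)$, then observe that since $G_v=G$ the restriction map at $v$ is the identity, so any class locally trivial at $v\notin\Sigma$ is globally trivial. The only cosmetic difference is that the paper invokes \cite[Lemme~1.1~(ii)]{Sansuc1981} directly for the equality $\Sh^1_\Sigma(k,A)=\Sh^1_\Sigma(L/k,A)$, rather than detouring through $\Sh^1_\omega$ and \eqref{sha cyc omega} as you do.
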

	\begin{proof}
		Let $v$ be a place of $k$ such that $G_v=G$. Then the restriction map 
			\[ \Res_v:H^1(G,A) \to H^1(G_v,A) \]
		is the identity. Let $\Sigma\subset\Omega_k$ be a finite set of places which does not contain $v$. So, if $\alpha\in\Sh^1_\Sigma(L/k,A)$ then $\Res_v(\alpha)=\alpha=0$. Thus, $\Sh^1_\Sigma(L/k,A)=0$. Now, by \cite[Lemme 1.1 (ii)]{Sansuc1981} we have $\Sh^1_\Sigma(k,A)=\Sh^1_\Sigma(L/k,A)$. Hence, $\Sh^1_\Sigma(k,A)=0$.
	\end{proof}
	
	The following results are obtained immediately from the previous lemma and give us a sufficiency condition to get $\Sh^1(k,A)=0$.

	\begin{corollary} \label{no achicar}
		Let $L/k$ be the minimal extension splitting the finite abelian $k$-group $A$. Assume that $A$ does not have approximation in $\Sigma$. Then $\Sigma$ contains all places whose decomposition group in $L/k$ is $\Gal(L/k)$. In particular, if $v\in\Omega_k$ is a place whose decomposition group in $L/k$ is $\Gal(L/k)$, then $A$ has approximation in $\Sigma_0(k,\hat{A})\smallsetminus\{v\}$.
	\end{corollary}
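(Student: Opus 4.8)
The plan is to read both assertions off Lemma~\ref{sha omega trivial} after dualising with Proposition~\ref{aproximacion y sha}. By Proposition~\ref{aproximacion y sha}, $A$ has approximation in a finite set $\Sigma$ if and only if $\Sh^1_\Sigma(k,\hat A)=\Sh^1(k,\hat A)$; as $\Sh^1(k,\hat A)\subseteq\Sh^1_\Sigma(k,\hat A)$ always, the hypothesis ``$A$ does not have approximation in $\Sigma$'' is exactly the statement that this inclusion is strict, and a fortiori that $\Sh^1_\Sigma(k,\hat A)\neq 0$. So it suffices to prove the contrapositive of the first assertion: if $v$ is a place with $G_v=\Gal(L/k)$ and $v\notin\Sigma$, then $A$ has approximation in $\Sigma$.

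For this I would apply Lemma~\ref{sha omega trivial} to the finite abelian $k$-group $\hat A$ and to the place $v$. Since $v$ has full decomposition group in the minimal extension splitting $\hat A$, the lemma gives $\Sh^1_\Sigma(k,\hat A)=0$ for every finite set $\Sigma$ not containing $v$; in particular, taking $\Sigma=\varnothing$, also $\Sh^1(k,\hat A)=0$. Hence $\Sh^1_\Sigma(k,\hat A)=0=\Sh^1(k,\hat A)$ and, by the first paragraph, $A$ has approximation in $\Sigma$ --- which is the desired contrapositive. The ``in particular'' clause then follows by specialising: for a place $v$ with $G_v=\Gal(L/k)$ apply the first assertion with $\Sigma:=\Sigma_0(k,\hat A)\smallsetminus\{v\}$, which does not contain $v$, and conclude that $A$ has approximation in $\Sigma_0(k,\hat A)\smallsetminus\{v\}$.

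The content of the argument is carried entirely by Lemma~\ref{sha omega trivial} and Proposition~\ref{aproximacion y sha}, so the only point that really needs care --- and the one I would flag as the main obstacle --- is the bookkeeping about which Galois extension plays the role of ``$L$'': Lemma~\ref{sha omega trivial} has to be invoked for $\hat A$, so the extension in the hypothesis should be the minimal one splitting $\hat A$ (which sits inside $L(\zeta_e)$, $e=\exp A$), and one must check that the decomposition-group condition is read off with respect to that extension rather than the splitting field of $A$. Pinning down this identification correctly is the crux of turning ``immediately from the previous lemma'' into an actual proof.
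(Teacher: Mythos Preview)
Your approach is exactly the paper's intended one: combine Proposition~\ref{aproximacion y sha} (to translate approximation for $A$ into a statement about $\Sh^1_\Sigma(k,\hat A)$) with Lemma~\ref{sha omega trivial} applied to $\hat A$. The contrapositive you write out is the whole argument.

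The flag you raise at the end is not mere bookkeeping; it is a genuine slip in the statement of the corollary. As written, the hypothesis concerns the decomposition group of $v$ in the minimal extension $L$ splitting $A$, whereas Lemma~\ref{sha omega trivial}---which must be applied to $\hat A$---requires the decomposition group of $v$ to be full in the minimal extension splitting $\hat A$. These two fields need not coincide (nor is one contained in the other), so the passage from ``$G_v=\Gal(L/k)$ in the splitting field of $A$'' to ``$G_v$ is full in the splitting field of $\hat A$'' does not go through in general. There is no hidden argument bridging this: the corollary should be stated with $L/k$ the minimal extension splitting $\hat A$.

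That corrected reading is also how the corollary is actually used later. In the proofs of Theorems~\ref{no aproximacion en l-grupos} and~\ref{no aproximacion en cuerpos} one takes $A=\hat I$, so $\hat A=I$, and the place $\p$ is known to have full decomposition group in the extension splitting $I=\hat A$; this is precisely the hypothesis the corrected corollary needs. With that emendation your proof is complete and coincides with the paper's.
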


	\begin{corollary} \label{sha trivial}
		Let $L/k$ be the minimal extension splitting the finite abelian $k$-group $A$. If there exists a place $v\in\Omega_k$ whose decomposition group in $L/k$ is $G:=\Gal(L/k)$, then $\Sh^1(k,A)=0$.
	\end{corollary}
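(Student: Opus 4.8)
The plan is to derive this as an immediate special case of Lemma \ref{sha omega trivial}. Recall that, by definition, $\Sh^1(k,A)=\Sh^1_{\varnothing}(k,A)$ is the kernel of the restriction map $H^1(k,A)\to\prod_{v\notin\varnothing}H^1(k_v,A)$, i.e.\ the restriction over \emph{all} places of $k$. The empty set $\Sigma=\varnothing$ is certainly a finite set of places not containing the given place $v$, so Lemma \ref{sha omega trivial} applies verbatim and yields $\Sh^1_{\varnothing}(k,A)=0$, which is exactly the claim. There is no real obstacle here; the corollary is just the $\Sigma=\varnothing$ instance of the lemma, recorded separately because it is the form that will be used to verify condition \eqref{ii} in the constructions that follow.

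For completeness one could also spell out the underlying mechanism without quoting the lemma as a black box: since $G_v=G$, the decomposition group at $v$ is all of $\Gal(L/k)$, so $L_w/k_v$ has Galois group $G$ and the restriction $\Res_v\colon H^1(G,A)\to H^1(G_v,A)$ is the identity map. By \cite[Lemme 1.1 (ii)]{Sansuc1981} the inflation $H^1(L/k,A)\to H^1(k,A)$ identifies $\Sh^1(k,A)$ with $\Sh^1(L/k,A)$, so any class $\alpha\in\Sh^1(k,A)$ comes from a class in $H^1(G,A)$ whose restriction to $G_v$ vanishes; as that restriction is the identity, $\alpha=0$. Either way, the argument is a one-line consequence of the fact that a place with full decomposition group "sees" all of $H^1(G,A)$, so nothing can vanish at every place while being detected by cohomology of $G$.

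I would present it in the first, shorter form: a two-sentence proof invoking Lemma \ref{sha omega trivial} with $\Sigma=\varnothing$, since the longer version merely re-runs the proof of that lemma.
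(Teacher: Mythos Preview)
Your proposal is correct and matches the paper's approach exactly: the paper states that Corollary~\ref{sha trivial} is obtained immediately from Lemma~\ref{sha omega trivial}, and your application of that lemma with $\Sigma=\varnothing$ is precisely the intended argument. The optional spelled-out version you give is also fine and simply re-runs the proof of the lemma in this special case.
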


	Let $G$ be a finite group of order $n$ and $L/k$ be a Galois extension with group $G$. The augmentation ideal of $\Z/n\Z[G]$ has a natural structure of abelian $k$-group. Indeed, $\Gal(L/k)$ acts (continuously) on $I$ via the natural action of $G$ on $I$. Extending this action to $\Gal(k)$ we have a structure of abelian $k$-group for $I$ where the minimal extension splitting $I$ is $L/k$ since $\Gal(L/k)$ acts faithfully on $\{g-1|g\in G\}$. In this way, we can ask:
		\begin{center}
			Is there a Galois extension $L/k$ with group $G$ of order $n$ where the abelian $k$-group $I$ and the set of places $\Sigma_0(k,I)$ satisfy the conditions \eqref{i} and \eqref{ii}?
		\end{center}
	
	At this point is important to note the following: if $\Sigma_0(k,I)$ does not contain any place dividing $n$ then $\Sigma_0(k,\hat{A})\subseteq\Bad_A^r\smallsetminus\Bad_A^d$ for $A=\hat{I}$.
	
	The following lemma answers the question above concretely and will guide our constructions.
	
	\begin{lemma} \label{cociente no trivial}
		Let $L/k$ be a Galois extension with Galois group $G$ of order $n$ and exponent $e$ such that $n/e\neq 1$ (for example, if $G$ is a non cyclic abelian group). Let $I$ be the finite abelian $k$-group defined as the augmentation ideal of $\Z/n\Z[G]$. Then, if there exists a place $v\in\Omega_k$ such that $G_v=G$, we have $\Sh_{\Sigma_0(k,I)}^1(k,I)/\Sh^1(k,I)\neq 0$.
	\end{lemma}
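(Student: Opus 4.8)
The plan is to assemble the lemma from the three tools developed just above, with essentially no new input. The first step is to pin down which algebraic group of the form $\Sh^1(\text{finite group}, I)$ is relevant. As observed in the paragraph preceding the statement, the $\Gal(k)$-action on $I$ factors through $G=\Gal(L/k)$, and $G$ acts faithfully on the spanning set $\{g-1 : g\in G\}$, so $L/k$ is \emph{the} minimal extension splitting the abelian $k$-group $I$. This is what makes the chain of equalities \eqref{sha cyc omega} (due to Sansuc) applicable with $A=I$, giving
\[ \Sh^1_{\Sigma_0(k,I)}(k,I) \;=\; \Sh^1_\omega(k,I) \;=\; \Sh^1_{\cyc}(L/k,I) \;=\; \Sh^1_{\cyc}(G,I). \]

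The second step is to evaluate this group via Lemma \ref{sha omega no trivial}: $\Sh^1_{\cyc}(G,I)\cong\Z/f\Z$ with $f=n/e$, which is nontrivial precisely because of the standing hypothesis $n/e\neq 1$. (For the parenthetical case this is automatic, since a finite cyclic group satisfies $n=e$, whence $n/e\neq1$ forces $G$ non-cyclic; conversely a non-cyclic abelian group has exponent strictly smaller than its order.) The third step uses the remaining hypothesis: since there is a place $v\in\Omega_k$ with $G_v=G$, Corollary \ref{sha trivial} applies to the $k$-group $I$ and yields $\Sh^1(k,I)=0$. Combining the three steps,
\[ \Sh^1_{\Sigma_0(k,I)}(k,I)\big/\Sh^1(k,I)\;\cong\;\Z/f\Z\;\neq\;0, \]
which is the assertion.

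I do not expect a genuine obstacle here; the lemma is a bookkeeping synthesis of Lemma \ref{sha omega no trivial}, the equality \eqref{sha cyc omega}, and Corollary \ref{sha trivial}. The only point that deserves care in the write-up is checking that the three decorations of $\Sh^1$ in play — $\Sh^1_{\cyc}(G,-)$ for the abstract finite group $G$, $\Sh^1_{\cyc}(L/k,-)$, and $\Sh^1_{\Sigma_0(k,-)}(k,-)$ — really denote the same group, which is exactly the content of \eqref{sha cyc omega} once one knows $L/k$ is the minimal splitting field; this is why the faithfulness remark should be recalled explicitly at the start of the proof. It is also worth noting in passing that $v\in\Sigma_0(k,I)$ (its decomposition group $G$ is non-cyclic), so $\Sigma_0(k,I)\neq\varnothing$ and the quotient in the statement is a quotient of genuine subgroups of $H^1(k,I)$.
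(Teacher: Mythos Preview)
Your proposal is correct and follows essentially the same route as the paper: identify $\Sh^1_{\Sigma_0(k,I)}(k,I)$ with $\Sh^1_{\cyc}(G,I)$ via \eqref{sha cyc omega}, invoke Lemma~\ref{sha omega no trivial} to see this is $\Z/f\Z\neq 0$, and then use Corollary~\ref{sha trivial} to kill $\Sh^1(k,I)$. If anything, your write-up is more careful than the paper's, which conflates the roles of \eqref{sha cyc omega} and Lemma~\ref{sha omega no trivial} in the first sentence and leaves the minimality of $L/k$ implicit.
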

	\begin{proof}
		By Lemma \ref{sha omega no trivial} we have $\Sh_{\Sigma_0(k,I)}^1(k,I)=\Sh_{\cyc}^1(G,I)\neq 0$ since $n/e\neq 1$. Now let us suppose that there is a place $v\in\Omega_k$ such that $G_v=G$, then by Corollary \ref{sha trivial} we have $\Sh^1(k,I)=0$. Hence, the quotient $\Sh^1_{\Sigma_0(k,I)}(k,I)/\Sh^1(k,I)$ is non trivial.
	\end{proof}	
	
%
%
%
%

\section{Failure of approximation on abelian $k$-groups}

	Throughout this section we will use latin letters (e.g. $p$) to denote primes in $\Q$ and gothic letters (e.g. $\p,\frak{P}$) to denote places in number fields distinct from $\Q$.

%
%

\subsection{Failure of approximation on $\ell$-groups}

	The aim of this subsection is to find a family of counterexamples to the approximation on $\ell$-groups when $\ell$ is a given prime. This is summarized in the following theorem.

	\begin{theorem} \label{no aproximacion en l-grupos}
		For every pair of primes $\ell$ and $p$ with $\cmod{p}{1}{\ell^n}$, there exist a number field $k$, a finite set of places $\Sigma\subseteq\Omega_k$ and an abelian $k$-group $A$ of order $\ell^a$, with $a=(n+1)(\ell^{n+1}-1)$, such that
			\begin{itemize}
				\item $\Sigma\subseteq\Bad^r_A\smallsetminus\Bad^d_A$ and it contains every place over $p$;
				\item $A$ does not have approximation in $\Sigma$;
				\item $A$ has approximation in $\Sigma\smallsetminus\{\p\}$ for every place $\p$ over $p$.
			\end{itemize}
	\end{theorem}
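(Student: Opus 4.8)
The plan is to apply Lemma \ref{cociente no trivial} to a carefully chosen Galois extension, and then translate the resulting non-triviality of $\Sh^1_{\Sigma_0}/\Sh^1$ into a failure of approximation via Proposition \ref{aproximacion y sha}. First I would fix the group $G$: since we need an abelian group of order $\ell^a$ with $a = (n+1)(\ell^{n+1}-1)$, and since $\ell^{n+1}-1 = |(\Z/\ell^{n+1}\Z)^\times|$ when... more precisely since we want $n/e \neq 1$, the natural candidate is $G = (\Z/\ell^{n+1}\Z)^{\oplus(\ell^{n+1}-1)}$ or some explicit non-cyclic abelian $\ell$-group whose regular-representation augmentation ideal $I = I_{\Z/|G|\Z[G]}$ has order $\ell^a$; one checks $|I| = |G|^{|G|-1}$ in the appropriate sense, which pins down the exponents. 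The key point is that $G$ is non-cyclic, so $n/e \neq 1$ and Lemma \ref{sha omega no trivial} gives $\Sh^1_{\cyc}(G,I) \neq 0$.

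Next I would realize $G$ as a decomposition group at a place over $p$. The hypothesis $p \equiv 1 \pmod{\ell^n}$ is what makes this possible: the cyclotomic field $\Q(\zeta_{p})$ has Galois group $(\Z/p\Z)^\times$ which is cyclic of order $p-1$, divisible by $\ell^n$; but to get the full non-cyclic $G$ as a local Galois group we need a ramified local extension. The idea is to take $k$ a suitable number field and $L/k$ a Galois extension with group $G$ in which some prime $\p \mid p$ has $G_\p = G$ — concretely, one takes a totally ramified abelian extension of $\Q_p$ of the right shape (such extensions exist by local class field theory, since $\Q_p^\times \cong \Z \times \Z_p^\times \cong \Z \times \Z/(p-1)\Z \times \Z_p$, and the pro-$\ell$ part of $\Z_p^\times$ together with tame quotients gives enough room to build a quotient isomorphic to $G$ when $\ell^n \mid p-1$), and then uses a standard globalization (e.g. Grunwald–Wang in the tame direction, or a direct construction with ray class fields) to produce a global $L/k$ with $\Gal(L/k) = G$ and $G_\p = G$. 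Then Lemma \ref{cociente no trivial} applies: $\Sh^1_{\Sigma_0(k,I)}(k,I)/\Sh^1(k,I) \neq 0$.

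It then remains to package the conclusion. Set $A = \hat{I}$; then $\hat{A} = I$ and $\Sigma_0(k,A)$... here one uses Proposition \ref{aproximacion y sha}: $A$ has approximation in $\Sigma$ iff $\Sh^1_\Sigma(k,\hat A) = \Sh^1(k,\hat A)$. Taking $\Sigma = \Sigma_0(k,I)$ we get failure of approximation. Since $L/k$ is the minimal extension splitting $I$ (because $G$ acts faithfully on $\{g-1\}$) and $p$ does not divide... wait, $p \neq \ell$, so no place in $\Sigma_0$ divides $|I| = \ell^a$; hence $\Sigma_0(k,A) = \Sigma_0(k,\hat I) \subseteq \Bad^r_A \smallsetminus \Bad^d_A$ as noted in the remark before Lemma \ref{cociente no trivial} — though one must double-check that $\Bad^d_A$ only involves $\ell$ and $\Bad^r_A$ only involves ramification in the minimal splitting field of $A$, which is contained in $L(\zeta_{\ell^{n+1}})$, so its ramified primes lie over $\ell$ or ramify in $L/k$; since $L/k$ is ramified only at places over $p$ (by construction) and places over $\ell$ contribute to $\Bad^d_A$, we get $\Sigma \subseteq \Bad^r_A \smallsetminus \Bad^d_A$. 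For the last bullet, I would invoke Corollary \ref{no achicar}: since $\p$ (over $p$) has $G_\p = G = \Gal(L/k)$, removing $\p$ from $\Sigma_0$ restores approximation; and the same should hold for every place over $p$ by arranging each of them to have full decomposition group, or more simply by observing that any single place with full decomposition group suffices for Corollary \ref{no achicar} and the construction can be set up so that all places over $p$ play symmetric roles.

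The main obstacle is the second step: producing a global Galois extension $L/k$ with $\Gal(L/k) \cong G$ (a prescribed non-cyclic abelian $\ell$-group) together with a place $\p$ over $p$ whose decomposition group is all of $G$, while keeping $L/k$ unramified away from places over $p$ (so that $\Bad^r_A$ contains only places over $p$) — this is a Grunwald–Wang-type realization problem, and one must check both that the local extension at $p$ with $\Gal = G$ exists (which forces constraints relating the structure of $G$, the ramification, and the congruence $p \equiv 1 \pmod{\ell^n}$, explaining the precise value of $a$) and that no Grunwald–Wang special-case obstruction intervenes. Everything else (the cohomological computations, the duality translation, the bookkeeping on $\Bad^r$ vs $\Bad^d$) is routine given the lemmas already established.
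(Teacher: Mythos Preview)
Your overall strategy --- take the augmentation ideal $I$ of $\Z/|G|\Z[G]$ for a suitable non-cyclic abelian $\ell$-group $G$, realize $G$ as $\Gal(L/k)$ with full decomposition at primes over $p$, set $A=\hat I$, and apply Lemma~\ref{cociente no trivial}, Proposition~\ref{aproximacion y sha}, and Corollary~\ref{no achicar} --- is exactly the paper's. But two concrete points go astray.

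First, your identification of $G$ is wrong. You want $|I|=\ell^a$ with $a=(n+1)(\ell^{n+1}-1)$. Since $I$ is the kernel of the augmentation $\Z/|G|\Z[G]\to\Z/|G|\Z$, one has $|I|=|G|^{|G|-1}$, so you need $|G|=\ell^{n+1}$, not the enormous group you suggest. The paper takes $G=\Z/\ell^n\Z\times\Z/\ell\Z$; this has order $\ell^{n+1}$ and exponent $\ell^n$, so $n/e=\ell\neq 1$ and Lemma~\ref{sha omega no trivial} applies.

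Second, your construction of $L/k$ is both vaguer and more constrained than necessary. You ask for $L/k$ unramified away from $p$ and appeal abstractly to Grunwald--Wang; this is stronger than what is needed (and you do not verify it can be done). The paper instead fixes $k=\Q(\zeta_{\ell^n})$ and takes the explicit Kummer extension
\[
L=k(\sqrt[\ell^n]{p},\sqrt[\ell]{q}),
\]
where $q$ is an auxiliary prime chosen (by Dirichlet) so that $q$ is not an $\ell$-th power modulo $p$ and $q$ is an $\ell$-th power in $\Q_\ell$ (via the congruences $q\equiv 1\pmod{8}$ if $\ell=2$, $q\equiv 1\pmod{\ell^2}$ otherwise). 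This immediately gives $G_\p=G$ for every $\p\mid p$ (totally ramified $\times$ inert), and forces the decomposition group at every place over $\ell$ to be cyclic --- which is all that is required to get $\Sigma_0(k,I)\subseteq\Bad_A^r\smallsetminus\Bad_A^d$. The extension is allowed to ramify at $q$; nothing in the statement forbids $\Sigma$ from containing places not over $p$. This explicit route avoids any Grunwald--Wang special-case worries and makes the verification of the three bullets completely elementary.
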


	Let $n$ be a positive integer, $\ell$ be a prime number and $k=\Q(\zeta_{\ell^n})$. The following result allows us to find an explicit Galois extension of $k$ where the decomposition group of a place of $k$ is its whole Galois group and where places of $k$ dividing the order of this extension have cyclic decomposition groups.

	\begin{lemma} \label{grupo entero p-grupos}
		Let $p$ be a prime congruent to 1 modulo $\ell^n$. Then there exists an odd prime $q$ such that 
			\begin{itemize}
				\item $\Gal(k_\p(\sqrt[\ell^n]{p},\sqrt[\ell]{q})/k_\p)=\Gal(k(\sqrt[\ell^n]{p},\sqrt[\ell]{q})/k)\cong\Z/\ell^n\Z\times\Z/\ell\Z$, for every place $\p\in\Omega_k$ over $p$;
				\item $\Gal(k_\spot{l}(\sqrt[\ell^n]{p},\sqrt[\ell]{q})/k_\spot{l}) \hookrightarrow \Z/\ell^n\Z$, for every place $\spot{l}\in\Omega_k$ over $\ell$.
			\end{itemize}
	\end{lemma}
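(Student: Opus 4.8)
\medskip
\noindent\emph{Sketch of proof.} The plan is to choose the prime $q$ so that $\sqrt[\ell]{q}$ adds an extra \emph{unramified} copy of $\Z/\ell\Z$ to the decomposition at every place over $p$ while becoming \emph{trivial} at the place over $\ell$, and then to produce such a $q$ via Dirichlet's theorem on primes in arithmetic progressions.

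First I would record the behaviour of $\sqrt[\ell^n]{p}$. Since $\cmod{p}{1}{\ell^n}$, the prime $p$ splits completely in $k=\Q(\zeta_{\ell^n})$, so $k_\p=\Q_p$ for every $\p\in\Omega_k$ over $p$ and $\zeta_{\ell^n}\in\Q_p$ by Hensel's lemma. As $p$ is a uniformizer of $\Q_p$, the polynomial $x^{\ell^n}-p$ is Eisenstein there, so $k_\p(\sqrt[\ell^n]{p})/k_\p$ is totally ramified and cyclic of degree $\ell^n$. Globally $v_\p(p)=1$ forces $p\notin(k^*)^\ell$, hence $\Gal(k(\sqrt[\ell^n]{p})/k)\cong\Z/\ell^n\Z$ by Kummer theory. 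Let $\spot{l}$ be the unique place of $k$ over $\ell$ and put $e:=v_\spot{l}(\ell)=\ell^{n-1}(\ell-1)$.

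Next I would ask an odd prime $q\neq p,\ell$ to satisfy: \emph{(a)} $q$ is not an $\ell$-th power modulo $p$; and \emph{(b)} $\cmod{q}{1}{\ell^M}$ for a fixed $M$ with $eM>\ell^n$ (one may take $M=3$). Condition \emph{(b)} forces $q\in(k_\spot{l}^*)^\ell$: the $\ell$-th power map carries $1+\m_\spot{l}^i$ isomorphically onto $1+\m_\spot{l}^{i+e}$ whenever $i>e/(\ell-1)$, so $1+\m_\spot{l}^j\subseteq(k_\spot{l}^*)^\ell$ for every $j>\ell e/(\ell-1)=\ell^n$; and since $\ell^M\Z_\ell\subseteq\m_\spot{l}^{eM}$ we get $q\in 1+\ell^M\Z_\ell\subseteq(k_\spot{l}^*)^\ell$. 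Granting such a $q$, both displayed properties follow. At each $\p$ over $p$, condition \emph{(a)} makes $k_\p(\sqrt[\ell]{q})/k_\p$ unramified (it is tame, $q$ being a unit and $p\nmid\ell$) and nontrivial of degree $\ell$; being totally ramified of degree $\ell^n$, resp.\ unramified of degree $\ell$, the fields $k_\p(\sqrt[\ell^n]{p})$ and $k_\p(\sqrt[\ell]{q})$ are linearly disjoint over $k_\p$, so the decomposition group at $\p$, namely $\Gal(k_\p(\sqrt[\ell^n]{p},\sqrt[\ell]{q})/k_\p)$, is isomorphic to $\Z/\ell^n\Z\times\Z/\ell\Z$ and has order $\ell^{n+1}$. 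On the other hand $[k(\sqrt[\ell^n]{p},\sqrt[\ell]{q}):k]\le\ell^n\cdot\ell=\ell^{n+1}$, so this decomposition group is the whole Galois group $\Gal(k(\sqrt[\ell^n]{p},\sqrt[\ell]{q})/k)$, which is therefore $\Z/\ell^n\Z\times\Z/\ell\Z$; that is the first property, uniformly in $\p$. For the second, \emph{(b)} gives $\sqrt[\ell]{q}\in k_\spot{l}$, so the decomposition group at $\spot{l}$ equals $\Gal(k_\spot{l}(\sqrt[\ell^n]{p},\sqrt[\ell]{q})/k_\spot{l})=\Gal(k_\spot{l}(\sqrt[\ell^n]{p})/k_\spot{l})$, which embeds by restriction into $\Gal(k(\sqrt[\ell^n]{p})/k)\cong\Z/\ell^n\Z$.

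Finally I would produce $q$. As $\ell\mid p-1$, the subgroup $(\F_p^*)^\ell$ is proper, so I may fix a residue $g$ modulo $p$ that is not an $\ell$-th power; since $\gcd(p,\ell^M)=1$, the congruences $\cmod{q}{g}{p}$ and $\cmod{q}{1}{\ell^M}$ amount, by the Chinese Remainder Theorem, to a single residue class modulo $p\ell^M$ coprime to $p\ell^M$, which contains infinitely many primes by Dirichlet's theorem; taking one with $q>p$ yields \emph{(a)} and \emph{(b)}, with $q$ automatically odd and different from $p$ and $\ell$. The one genuinely delicate point is the local analysis at $\spot{l}\mid\ell$: there the Kummer theory of exponent $\ell$ is wild and the ramification of $k_\spot{l}(\sqrt[\ell^n]{p})/k_\spot{l}$ is awkward to control directly, but condition \emph{(b)} circumvents this by killing $\sqrt[\ell]{q}$ locally at $\spot{l}$, leaving there a decomposition group that is automatically a cyclic quotient of $\Z/\ell^n\Z$, whatever the ramification of $\sqrt[\ell^n]{p}$.
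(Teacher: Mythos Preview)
Your argument is correct and follows essentially the same strategy as the paper: pick $q$ by Dirichlet so that it is a non-$\ell$-th-power modulo $p$ and congruent to $1$ modulo a suitable power of $\ell$, then check the local pictures at $\p\mid p$ (Eisenstein for $\sqrt[\ell^n]{p}$, unramified of degree $\ell$ for $\sqrt[\ell]{q}$) and at $\spot{l}\mid\ell$ (where $\sqrt[\ell]{q}$ becomes trivial). The only noteworthy difference is in step~\emph{(b)}: the paper imposes the weaker congruence $q\equiv 1\pmod{\ell^2}$ (resp.\ $q\equiv 1\pmod 8$ when $\ell=2$) and observes directly that this already forces $q\in(\Q_\ell^*)^\ell$, hence a fortiori $q\in(k_\spot{l}^*)^\ell$; you instead work intrinsically in $k_\spot{l}$ via the exp/log isomorphism $1+\m_\spot{l}^i\cong 1+\m_\spot{l}^{i+e}$, which is perfectly valid but leads you to the slightly stronger (and unnecessary) condition $q\equiv 1\pmod{\ell^3}$.
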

	\begin{proof}
		By Dirichlet's Theorem there is an odd prime $q$ which is not an $\ell$-th power modulo $p$ and such that
			\begin{itemize}
				\item $\cmod{q}{1}{8}$ if $\ell=2$;
				\item $\cmod{q}{1}{\ell^2}$ if $\ell\neq 2$.
			\end{itemize}
		Set $L=k(\sqrt[\ell^n]{p},\sqrt[\ell]{q})$. Note that $k/\Q$ is unramified at $p$. Then, the polynomial $x^{\ell^n}-p$ is an Eisenstein polynomial at $\p$. So $k(\sqrt[\ell^n]{p})/k$ is a Galois extension with group isomorphic to $\Z/\ell^n\Z$ which is totally ramified at $\p$ and $k_\p(\sqrt[\ell^n]{p})/k_\p$ has Galois group isomorphic to $\Z/\ell^n\Z$. On the other hand, $k(\sqrt[\ell]{q})/k$ is unramified at $\p$ (since $x^\ell-q$ is separable in $\kappa(\p)$) so, $\Gal(k_\p(\sqrt[\ell]{q})/k_\p)\cong \Z/\ell\Z$ if and only if $[\kappa(\P):\kappa(\p)]=\ell$, where $\P$ is a place of $k(\sqrt[\ell]{q})$ over $\p$. We note that $p$ splits completely in $k$ since $\cmod{p}{1}{\ell^n}$, so $\kappa(\p)=\F_p$ and 
			\[ [\kappa(\P):\kappa(\p)]=[\F_p(\sqrt[\ell]{\overline{q}}):\F_p]=\ell, \]
		since $q$ is not an $\ell$-th power modulo $p$. Therefore, $k_\p(\sqrt[\ell]{q})/k_\p$ is a cyclic extension of order $\ell$. Hence, $L_\p:=k_\p(\sqrt[\ell^n]{p},\sqrt[\ell]{q})$ is a ramified extension with
			\[ \Gal(L_\p/k_\p) \cong \Z/\ell^n\Z \times \Z/\ell\Z \cong \Gal(L/k). \]
		Finally, by the properties satisfied by $q$ above we have $q$ is an $\ell$-power in $\Q_\ell$, hence
			\[ \Gal(k_{\spot{l}}(\sqrt[\ell^n]{p},\sqrt[\ell]{q})/k_{\spot{l}}) \hookrightarrow \Z/\ell^n\Z, \]
		for every place $\spot{l}\in\Omega_k$ over $\ell$.
	\end{proof}

	The following result is immediately obtained from previous lemma.
	
	\begin{lemma} \label{buena extension en l-grupos}
		Let $p$ be a prime congruent to 1 modulo $\ell^n$. Then there exists a Galois extension $L/k$ with group $G=\Z/\ell^n\Z\times\Z/\ell\Z$ such that $G_\p=G$ for every place $\p\in\Omega_k$ over $p$ and $G_\frak{l}$ is cyclic for every place $\spot{l}\in\Omega_k$ over $\ell$.
	\end{lemma}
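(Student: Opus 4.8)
The plan is to simply repackage Lemma \ref{grupo entero p-grupos} into the stated form. Given a prime $p\equiv 1\pmod{\ell^n}$, apply Lemma \ref{grupo entero p-grupos} to obtain an odd prime $q$ with the two displayed properties, and set $L=k(\sqrt[\ell^n]{p},\sqrt[\ell]{q})$ with $G=\Gal(L/k)$. The first bullet of Lemma \ref{grupo entero p-grupos} already gives $G\cong\Z/\ell^n\Z\times\Z/\ell\Z$, together with the identification $\Gal(L_\p/k_\p)=\Gal(L/k)$ for every place $\p$ over $p$; translating into decomposition-group language, this says precisely $G_\p=G$ for every $\p\mid p$. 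The second bullet says $\Gal(L_\spot{l}/k_\spot{l})$ embeds into $\Z/\ell^n\Z$ for every place $\spot{l}$ over $\ell$, and since any subgroup of the cyclic group $\Z/\ell^n\Z$ is cyclic, this yields that $G_\spot{l}$ is cyclic for every $\spot{l}\mid\ell$, which is the second assertion.

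The only point that requires a word is the passage from the local Galois group $\Gal(L_w/k_v)$ to the decomposition group $G_v$. This is exactly the identification recalled in the Conventions and notations section: for a Galois extension $L/k$ with group $G$ and a place $v\in\Omega_k$, choosing a place $w\mid v$ of $L$ gives $\Gal(L_w/k_v)=G(w/v)=:G_v$, and the conjugacy ambiguity coming from the choice of $w$ does not affect whether $G_v$ equals $G$ (if it does for one choice it does for all) nor whether it is cyclic. So both properties are well-posed statements about $G_v$ and follow directly from the corresponding statements about $\Gal(L_\p/k_\p)$ and $\Gal(L_\spot{l}/k_\spot{l})$ established in Lemma \ref{grupo entero p-grupos}.

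Since the content is entirely contained in the previous lemma, there is no real obstacle here; the proof is a two-line dereferencing. I would write simply: take the prime $q$ and the field $L=k(\sqrt[\ell^n]{p},\sqrt[\ell]{q})$ furnished by Lemma \ref{grupo entero p-grupos}; the first property of that lemma gives $G_\p=G\cong\Z/\ell^n\Z\times\Z/\ell\Z$ for all $\p\mid p$, and the second gives $G_\spot{l}\hookrightarrow\Z/\ell^n\Z$, hence $G_\spot{l}$ cyclic, for all $\spot{l}\mid\ell$. $\qed$
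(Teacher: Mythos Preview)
Your proposal is correct and matches the paper's approach exactly: the paper states that this lemma ``is immediately obtained from previous lemma'' (Lemma \ref{grupo entero p-grupos}), and your write-up simply spells out that immediate deduction. If anything, you give more detail than the paper does.
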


	Thanks to this result we can prove the Theorem \ref{no aproximacion en l-grupos}.

	\begin{proof}[Proof of Theorem \ref{no aproximacion en l-grupos}]
		Let $L/k$ be the extensión given by Lemma \ref{buena extension en l-grupos} and $G:=\Gal(L/k)$. Let $I$ be the augmentation ideal of $\Z/\ell^{n+1}\Z[G]$ equipped with the structure of $k$-group defined by $L/k$. Then, the set $\Sigma_0(k,I)$ contains every place over $p$, it does not contain any place over $\ell$ and, by Lemma \ref{cociente no trivial}, the quotient $\Sh_{\Sigma_0(k,I)}^1(k,I)/\Sh^1(k,I)$ is non trivial. Hence, setting $A:=\hat{I}$ we have, by Proposition \ref{sigma0}, 
			\[ \Sigma_0(k,\hat{A})=\Sigma_0(k,I)\subseteq\Bad_A^r\smallsetminus\Bad_A^d, \]
		$A$ does not have approximation in $\Sigma_0(k,\hat{A})$ and it has approximation in $\Sigma_0(k,\hat{A})\smallsetminus\{\p\}$ for every place $\p$ over $p$ (cf.\ Corollary \ref{no achicar}).
	\end{proof}

	\begin{remark} \label{observaciones}
		When $n=1$, the set $\Sigma_0(k,\hat{A})$ is the set of places whose decomposition group is $\Gal(L/k)$. Thus we have that $\Sigma_0(k,\hat{A})$ is the smallest subset of $\Omega_k$ where $A$ does not have approximation. 
		
		In particular, if $n=1$ and $\ell=2$ we have $k=\Q$ and $\Sigma_0(\Q,I)=\{p,q\}$: indeed, the only primes of $\Q$ whose decomposition group in $L=\Q(\sqrt{p},\sqrt{q})/\Q$ is not cyclic are $p$ and possibly $q$ (cf.\ Lemma \ref{grupo entero p-grupos}). Then $\Sigma_0(\Q,I)\subset\{p,q\}$. Now, by hypothesis $q$ is congruent to 1 modulo 4 and it is not a square modulo $p$. Then by the law of quadratic reciprocity we have that $p$ is not a square modulo $q$ either. Therefore, the decomposition group of $q$ in $L/\Q$ is $\Gal(L/\Q)$. Hence, $\Sigma_0(\Q,I)=\{p,q\}=\Bad^r_A\smallsetminus\Bad^d_A$.
	\end{remark}

%
%

\subsection{Failure of approximation at almost every place of a number field}

	The aim of this subsection is to find a family of counterexamples to the approximation in every number field $k$ and in every place of $k$ not dividing 2. This is summarized in the following result.
	
	\begin{theorem} \label{no aproximacion en cuerpos}
		For every number field $k$ and for every place $\p\in\Omega_k$ which does not divide 2, there exists a finite set of places $\Sigma\subseteq\Omega_k$ and a finite abelian $k$-group $A$ such that
			\begin{itemize}
				\item $\p\in\Sigma\subseteq\Bad_A^r\smallsetminus\Bad_A^d$;
				\item $A$ does not have approximation in $\Sigma$;
				\item $A$ has approximation in $\Sigma\smallsetminus\{\p\}$.
			\end{itemize}
	\end{theorem}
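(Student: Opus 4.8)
The plan is to reproduce the mechanism of Theorem~\ref{no aproximacion en l-grupos}, the point of departure being that $k$ is now arbitrary, so the relevant extension must be produced by Grunwald--Wang rather than by explicit radicals. It suffices to produce a finite Galois extension $L/k$ whose group $G$ is non-cyclic abelian, with $|G|$ prime to the residue characteristic of $\p$, such that the decomposition group $G_\p$ equals $G$ while every place of $k$ dividing $|G|$ has cyclic decomposition group in $L/k$. Granting such an $L/k$, let $I$ be the augmentation ideal of $\Z/|G|\Z[G]$ with the $k$-group structure defined by $L/k$, so that $L/k$ is the minimal splitting field of $I$; put $A:=\hat{I}$ and $\Sigma:=\Sigma_0(k,I)=\Sigma_0(k,\hat{A})$. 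Then $\Sigma$ is finite (it is contained in the set of places ramified in $L/k$), it contains $\p$ since $G_\p=G$ is non-cyclic, and it contains no place dividing $|G|$; hence by Proposition~\ref{sigma0} and the remark preceding Lemma~\ref{cociente no trivial} one gets $\Sigma\subseteq\Bad_A^r\smallsetminus\Bad_A^d$. By Lemma~\ref{cociente no trivial} (applicable because $G$ is non-cyclic abelian and $G_\p=G$) we have $\Sh^1_\Sigma(k,I)/\Sh^1(k,I)\neq 0$, so $A$ has no approximation in $\Sigma$ by Proposition~\ref{aproximacion y sha}. Finally, Corollary~\ref{sha trivial} and Lemma~\ref{sha omega trivial}, applied to the $k$-group $I$ and to the place $\p$ (which has full decomposition group in $L/k$ and lies outside $\Sigma\smallsetminus\{\p\}$), give $\Sh^1(k,I)=\Sh^1_{\Sigma\smallsetminus\{\p\}}(k,I)=0$, and Proposition~\ref{aproximacion y sha} then shows $A$ has approximation in $\Sigma\smallsetminus\{\p\}$ (cf.\ Corollary~\ref{no achicar}). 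Thus the theorem reduces to constructing $L/k$.

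We may assume $\p$ is non-archimedean, archimedean places having cyclic decomposition groups (and being excluded by our conventions anyway); write $p$ for its residue characteristic, so $p\geq 3$ since $\p\nmid 2$, and $q=|\kappa(\p)|$, so $q-1\geq 2$. Choose a prime $\ell$ dividing $q-1$. Then $\ell\neq p$ because $p\nmid q-1$; in particular $\p\nmid\ell$, and since $\kappa(\p)^{\times}$ is cyclic of order divisible by $\ell$, Hensel's lemma gives $\mu_\ell\subseteq k_\p$. Take $G=\Z/\ell\Z\times\Z/\ell\Z$: it is non-cyclic abelian and $|G|=\ell^2$ is prime to $p$.

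The construction of $L/k$ is the crux. As $\mu_\ell\subseteq k_\p$ and $\ell\neq p$, local class field theory gives $k_\p^{\times}/(k_\p^{\times})^{\ell}\cong(\Z/\ell\Z)^2$, so $k_\p$ admits a $(\Z/\ell\Z)^2$-extension $M_\p$ --- the compositum of a totally ramified $\Z/\ell\Z$-extension and the unramified one --- with $\Gal(M_\p/k_\p)\cong G$. By the Grunwald--Wang theorem, whose exceptional case does not intervene here since $G$ has exponent the prime $\ell$, there is a Galois extension $L/k$ with $\Gal(L/k)\cong G$ which localizes to $M_\p$ at $\p$ and in which every place of $k$ above $\ell$ splits completely. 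Such an $L$ has $G_\p=G$ and trivial --- in particular cyclic --- decomposition group at every place dividing $|G|=\ell^2$, as required. (Alternatively one may take $L=L_1L_2$ with $L_1,L_2$ two distinct cyclic degree-$\ell$ extensions of $k$, $\p$ inert in $L_1$ and ramified in $L_2$, and every place above $\ell$ split in $L_1$; these are again furnished by class field theory, the ramified one using $\ell\mid q-1$.)

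The main obstacle is precisely this last step: since $k$ need not contain $\mu_\ell$, global Kummer theory is unavailable and one must appeal to Grunwald--Wang, checking that its exceptional case is avoided. The existence of the local datum at $\p$ --- a $(\Z/\ell\Z)^2$-extension of $k_\p$ --- forces the choice of $\ell$ as a divisor of $q-1=|\kappa(\p)^{\times}|$, which is where the hypothesis $\p\nmid 2$ (ensuring $q-1\geq 2$) enters.
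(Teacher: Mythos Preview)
Your argument is correct. The reduction you set up in the first paragraph---take $I$ the augmentation ideal of $\Z/|G|\Z[G]$ for a suitable extension $L/k$, put $A=\hat I$ and $\Sigma=\Sigma_0(k,I)$, then invoke Lemma~\ref{cociente no trivial}, Lemma~\ref{sha omega trivial} and Proposition~\ref{aproximacion y sha}---is exactly the paper's strategy, and your verification that $\Sigma\subseteq\Bad_A^r\smallsetminus\Bad_A^d$ via Proposition~\ref{sigma0} is the same as well.

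Where you diverge is in the construction of $L/k$. The paper fixes $\ell=2$ throughout and builds $L=k(\sqrt a,\sqrt b)$ by hand (Lemma~\ref{grupo entero cuerpo}): it uses the weak Approximation Theorem to choose $a\in\ent_k$ that is a non-square modulo $\p$ and a square in $\ent_v$ for every $v\mid 2$, and takes $b$ to be any element with $v_\p(b)=1$; this yields a biquadratic $L/k$ with $G_\p=G$ and $G_v$ cyclic for $v\mid 2$, with no appeal to class field theory. Your route instead picks any prime $\ell\mid(|\kappa(\p)|-1)$, takes $G=(\Z/\ell\Z)^2$, and produces $L/k$ via Grunwald--Wang (the special case being excluded for prime exponent). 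Both are valid; the paper's argument is more elementary and self-contained, while yours is slightly more flexible (the group $A$ can be an $\ell$-group for any $\ell$ dividing $|\kappa(\p)|-1$, not only $\ell=2$) at the cost of invoking a deeper theorem. Your parenthetical alternative---building $L=L_1L_2$ from two $\Z/\ell\Z$-extensions with controlled local behaviour---is in fact closer in spirit to the paper's explicit construction and makes the linear-disjointness transparent.
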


	Let $k$ be a number field and $\p$ be a place of $k$ over a prime $p\in\Q$.

	\begin{lemma} \label{grupo entero cuerpo}
		Suppose that $p$ is distinct from 2. Then, there exists algebraic integers $a,b\in\ent_k$ such that
			\begin{itemize}
				\item $\Gal(k_\p(\sqrt{a},\sqrt{b})/k_\p) = \Gal(k(\sqrt{a},\sqrt{b})/k) \cong\Z/2\Z\times\Z/2\Z$;
				\item $\Gal(k_v(\sqrt{a},\sqrt{b})/k_v) \hookrightarrow \Z/2\Z$, for every place $v\in\Omega_k$ over $2$.
			\end{itemize}
	\end{lemma}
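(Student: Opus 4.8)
The plan is to run the argument of Lemma~\ref{grupo entero p-grupos} in the case $\ell=2$, $n=1$ over an arbitrary number field $k$. There, the two generators of the biquadratic extension were a rational prime (producing a ramified quadratic extension at $\p$) and an auxiliary prime (producing an unramified quadratic extension at $\p$, controlled at the places above $\ell$). Over a general $k$ the place $\p$ need not be principal, so there is no canonical uniformizing element; the remedy is to use approximation to produce $a\in\ent_k$ with $v_\p(a)=1$, which already forces $a\notin(k_\p^*)^2$ and hence makes $k(\sqrt a)/k$ quadratic and ramified at $\p$. For the second generator I will take $b\in\ent_k$ whose reduction modulo $\p$ is a non-square in $\kappa(\p)^*$; this is possible because $\kappa(\p)$ is a finite field of odd order (as $\p\mid p$ with $p\neq 2$), and then $b$ is a unit at $\p$ lying outside $(k_\p^*)^2$. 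To deal with the places above $2$ I will additionally arrange that $a$ is a square in $k_v$ for every $v\mid 2$, so that $k_v(\sqrt a,\sqrt b)=k_v(\sqrt b)$ has degree at most $2$ there, with no constraint needed on $b$.

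In detail, I would proceed as follows. For each of the finitely many $v\mid 2$ fix $N_v\geq 1$ with $1+\m_v^{N_v}\subseteq(k_v^*)^2$ (the local square criterion; one may take $N_v=2\,v(2)+1$). Since $p\neq 2$, the place $\p$ is distinct from all places above $2$, so by the Chinese Remainder Theorem in $\ent_k$ there is $a\in\ent_k$ with $v_\p(a)=1$ and $\cmod{a}{1}{\m_v^{N_v}}$ for every $v\mid 2$; thus $a\notin(k_\p^*)^2$ while $a\in(k_v^*)^2$ for all $v\mid 2$. Next choose $b\in\ent_k$ with $\overline b$ a non-square in $\kappa(\p)^*$, so $v_\p(b)=0$ and $b\notin(k_\p^*)^2$. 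The valuation $v_\p$ induces a surjection $k_\p^*/(k_\p^*)^2\twoheadrightarrow\Z/2\Z$ sending the class of $a$ to $1$ and that of $b$ to $0$; combined with the fact that the class of $b$ is nonzero, this shows the classes of $a$ and $b$ are $\F_2$-linearly independent in $k_\p^*/(k_\p^*)^2$. By Kummer theory $\Gal(k_\p(\sqrt a,\sqrt b)/k_\p)\cong\Z/2\Z\times\Z/2\Z$; since independence in $k_\p^*/(k_\p^*)^2$ implies independence in $k^*/(k^*)^2$, also $\Gal(k(\sqrt a,\sqrt b)/k)\cong\Z/2\Z\times\Z/2\Z$, and as both groups have order $4$ the decomposition group of $\p$ is the whole group, which is the first assertion. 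For $v\mid 2$ we have $k_v(\sqrt a)=k_v$, so $k_v(\sqrt a,\sqrt b)=k_v(\sqrt b)$ and $\Gal(k_v(\sqrt a,\sqrt b)/k_v)\hookrightarrow\Z/2\Z$, which is the second.

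I do not anticipate a genuine obstacle: the proof is square-class bookkeeping together with one application of approximation. The points that need a little care are that the hypothesis $p\neq 2$ is used twice — to guarantee that $\kappa(\p)$ has odd order (so a non-square unit exists) and that $\p$ is disjoint from the places above $2$ (so the congruences defining $a$ are compatible) — and the precise form of the local square criterion at dyadic places, for which a standard reference should be cited (for $F/\Q_2$ one has $1+\m_F^{2e+1}\subseteq(F^*)^2$ with $e=v(2)$). One could alternatively also impose $\cmod{b}{1}{\m_v^{N_v}}$, making $k_v(\sqrt a,\sqrt b)=k_v$ outright for $v\mid 2$; this is equally easy and perhaps cleaner to state.
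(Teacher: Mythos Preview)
Your proof is correct and follows essentially the same approach as the paper's: produce one ramified and one unramified quadratic extension at $\p$, and use approximation to force one of the two generators to be a local square at every place above~$2$. The only difference is cosmetic---the paper swaps the roles of your $a$ and $b$ (their $a$ is the unit that is a non-square modulo $\p$ and a square at each $v\mid 2$, their $b$ has $v_\p(b)=1$) and invokes the Approximation Theorem where you use the Chinese Remainder Theorem.
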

	\begin{proof}
		Let $\Sigma_2\subset\Omega_k$ be the set of places over 2. Set $a_\p\in \ent_\p^*\smallsetminus \ent_\p^{*2}$ such that $\overline{a_\p}$ is not a square in $\kappa(\p)$. Since $\ent_v^{*2}$ is open in $\ent_v^*$ for every $v\in\Sigma_2$, we have that, by the Approximation Theorem on number fields (cf.\ \cite[Chapter II, Theorem 3.4]{NeukirchANT}), there exists an $a\in\ent_k$ such that $a-a_\p\in\p\ent_\p$ and $a\in\ent_v^{*2}$ for each $v\in\Sigma_2$. In particular, $a$ is not a square modulo $\p$. Therefore, $k_v(\sqrt{a})/k_v$ is trivial for every $v\in\Sigma_2$ and $k_\p(\sqrt{a})/k_\p$ is a quadratic extension since $\p$ does not divide 2 and 
			\[ 2 = [\kappa(\p)(\sqrt{\overline{a}}):\kappa(\p)] \leq [k_\p(\sqrt{a}):k_\p] \leq 2. \]
		On the other hand, there exists $b\in k^*$ such that $v_\p(b)=1$. So, the extension $k_\p(\sqrt{b})/k_\p$ is a totally ramified quadratic extension. Hence,
			\[ \Gal(k_\p(\sqrt{a},\sqrt{b})/k_\p) = \Gal(k(\sqrt{a},\sqrt{b})/k) \cong\Z/2\Z\times\Z/2\Z, \]
		and
			\[ \Gal(k_v(\sqrt{a},\sqrt{b})/k_v) = \Gal(k_v(\sqrt{p})/k_v) \hookrightarrow \Z/2\Z, \]
		for every $v\in\Sigma_2$.
	\end{proof}
	
	\begin{remark}
		If $p$ does not divide the discriminant of $k/\Q$, then we can take $b=p$.
	\end{remark}
	
	The following result is immediately obtained from the previous lemma.
	
	\begin{lemma} \label{buena extension}
		Suppose that $p$ is distinct from 2. Then there is a biquadratic extension $L/k$ such that $G_\p=\Gal(L/k)$ and $G_v$ is cyclic for every place $v\in\Omega_k$ over $2$.
	\end{lemma}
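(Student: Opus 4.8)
The plan is to deduce this statement directly from Lemma~\ref{grupo entero cuerpo}. First I would apply that lemma to the rational prime $p$ lying under $\p$ — legitimate since $p\neq 2$ by hypothesis — obtaining algebraic integers $a,b\in\ent_k$ such that $\Gal(k_\p(\sqrt{a},\sqrt{b})/k_\p)=\Gal(k(\sqrt{a},\sqrt{b})/k)\cong\Z/2\Z\times\Z/2\Z$ and $\Gal(k_v(\sqrt{a},\sqrt{b})/k_v)\hookrightarrow\Z/2\Z$ for every place $v\in\Omega_k$ above $2$. I would then take $L:=k(\sqrt{a},\sqrt{b})$ and $G:=\Gal(L/k)$; since $G\cong\Z/2\Z\times\Z/2\Z$, the extension $L/k$ is biquadratic, as required.

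Next I would translate the local statements of Lemma~\ref{grupo entero cuerpo} into statements about decomposition groups. For a place $w\in\Omega_L$ over $\p$ one has $L_w=k_\p(\sqrt{a},\sqrt{b})$ (the compositum of $L$ and $k_\p$ inside $\overline{k_\p}$), so by the convention on decomposition groups recalled earlier, $G_\p=\Gal(L_w/k_\p)=\Gal(k_\p(\sqrt{a},\sqrt{b})/k_\p)$, which equals $\Gal(k(\sqrt{a},\sqrt{b})/k)=G$ by the first bullet of the lemma. Similarly, for $v$ over $2$ we get $G_v=\Gal(k_v(\sqrt{a},\sqrt{b})/k_v)$, and the second bullet provides an embedding $G_v\hookrightarrow\Z/2\Z$; since every subgroup of $\Z/2\Z$ is cyclic (indeed $G_v$ has order $1$ or $2$), $G_v$ is cyclic.

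I do not expect any real obstacle here: essentially all of the content lives in Lemma~\ref{grupo entero cuerpo}, and what remains is the bookkeeping identification of $G_\p$ with the local Galois group $\Gal(L_w/k_\p)$ together with the trivial observation about subgroups of $\Z/2\Z$. The only point deserving a word of care is that the decomposition subgroup depends on the choice of $w$ above $\p$ only up to conjugacy in $G$; since $G$ is abelian in our situation this ambiguity disappears entirely, and in any event it does not affect the assertion ``$G_\p=G$''.
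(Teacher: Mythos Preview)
Your proposal is correct and matches the paper's approach exactly: the paper states that the lemma ``is immediately obtained from the previous lemma'' (Lemma~\ref{grupo entero cuerpo}), and you have simply spelled out the straightforward identification of local Galois groups with decomposition groups that makes this immediate.
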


	Thanks to this result we can prove Theorem \ref{no aproximacion en cuerpos}. 
	
	\begin{proof}[Proof of Theorem \ref{no aproximacion en cuerpos}]
		Let $L/k$ be the extension given by Lemma \ref{buena extension} and $G:=\Gal(L/k)$. Let $I$ be the augmentation ideal of $\Z/4\Z[G]$ equipped with the structure of abelian $k$-group defined by $L/k$. The proof continues in the same way as Theorem \ref{no aproximacion en l-grupos} replacing $\ell$ by 2. 
	\end{proof}
	
	\begin{remark}
		If we would like to prove the Theorem \ref{no aproximacion en cuerpos} when $p=2$ using the Lemma \ref{cociente no trivial}, then $G$ cannot be asumed abelian in general. Indeed, if $\kappa(\p)=\F_2$ (e.g. when $k/\Q$ is totally ramified at $\p$), then $k_\p^*\cong\Z\times\ent_\p^*$ where $\ent_\p^*$ is a pro-2-group. On the other hand, the reciprocity map in $k_\p$ gives a correspondence between finite abelian extensions of $k_\p$ and finite quotients of $k_\p^*$ (cf. \cite[Théorème 9.13]{HarariCG}). Hence, there is no non cyclic finite abelian extension of $k_\p$ of degree prime to 2. 
	\end{remark}
	
	\begin{remark}
		In Theorems \ref{no aproximacion en l-grupos} and \ref{no aproximacion en cuerpos} we give counterexamples to the approximation for:
			\begin{itemize}
				\item abelian $k$-groups of order an $\ell$-th power for every prime $\ell$ (Theorem \ref{no aproximacion en l-grupos});
				\item and at every place not dividing 2 of an arbitrary number field (Theorem \ref{no aproximacion en cuerpos}).
			\end{itemize}
		This brings up the following question, would it be possible to use the same constructions in order to find a counterexample to the approximation with a completely arbitrary choice of $\ell$, $\p$ and $k$? More precisely, given a prime $\ell$, a number field $k$ and a place $\p\in\Omega_k$ over a prime $p$ distinct from $\ell$, does there exist a non-cyclic Galois extension $L/k$ of degree an $\ell$-th power such that the set $\Sigma_0(k,I)\subseteq\Bad_I^r\smallsetminus\Bad_I^d$ contains $\p$ and $G_\p=\Gal(L/k)$? The answer is no. Indeed, let $\frak{P}$ be a place of $L$ over $\p$. If we suppose that $G_\p=\Gal(L/k)$ then $L_\P/k_\p$ is tamely ramified. Let $T/k_\p$ be the maximal unramified subextension of $L_\P/k_\p$, then the extension $L_\P/T$ is generated by radicals of order divisible by $\ell$ (cf.\ \cite[Chapter II, Proposition 7.7]{NeukirchANT}). This implies that $\mu_\ell\subseteq T$ and thus $\ell$ divides $|\kappa_T|-1$, where $\kappa_T$ is the residue field of $T$. Hence, we cannot expect to improve the results obtained in theorems \ref{no aproximacion en l-grupos} and \ref{no aproximacion en cuerpos}. That is, using the same constructions we cannot find examples with completely arbitrary choice of $\ell$, $\p$ and $k$ since we will inevitably have the restriction $\ell$ divides $|\kappa_T|-1$. 
	\end{remark}

\bibliographystyle{alpha}
\bibliography{article}

\end{document}